\newtheorem{theorem}{Theorem}[section]
\newtheorem{proposition}[theorem]{Proposition}
\newtheorem{lemma}[theorem]{Lemma} 
\newtheorem{corollary}[theorem]{Corollary} 
\theoremstyle{definition}
\newtheorem{definition}[theorem]{Definition}
\newtheorem{example}[theorem]{Example}
\newcommand\R{\mathbb R}
\author[Blagojevi\'c]{Matija Blagojevi\'{c}}
\thanks{ 
	The research by Matija Blagojevi\'c leading to these results has received funding from Studienstiftung des deutschen Volkes through their Expos\'{e}-Stipendium program.
}
\email{blagojevic@zib.de}
\address{Zuse Institut, 14195 Berlin, Germany}
\author[Sch\"{u}tte]{Christof Sch\"{u}tte}
\email{schuette@zib.de}
\address{Zuse Institut, 14195 Berlin, Germany}
\title{Topology of the Generalized Nash Equilibrium Problem}
\begin{document}
\begin{abstract}
	The Generalized Nash Equilibrium Problem refers to the question of the existence of a Nash equilibrium in an abstract economy. This model is due to Kenneth J. Arrow and G\'erard Debreu in their pioneering work from 1954. An abstract economy is an extension of John Nash's original concept of a non-cooperative game from the 1950s. Here players selfishly seek to maximize their profits which may depend on the others' choices. The novelty of an abstract economy is that the players may now mutually constrain each other in their decision-making. A Nash equilibrium is reached when no player alone can increase his profit by a unilateral change of strategy. Abstract economies have found widespread applications from welfare economy, economic analysis and policymaking to constrained optimization, partial differential equations and optimal allocation.

	We generalize Leigh Tesfatsion's Nash equilibrium existence result for non-cooperative games to abstract economies without resorting to commonly employed convexity assumptions, thereby re-proving all known Nash equilibrium existence results as special cases. A key element of our proof is the translation of the Generalized Nash Equilibrium Problem into the question of the existence of a coincidence of two particularly defined maps. We then apply a coincidence result from algebraic topology due to Samuel Eilenberg and Deane Montgomery in 1946.
	Additionally, we provide examples of abstract economies which satisfy the assumptions of our main result, but due to lacking convexity assumptions, do not satisfy the assumptions of other classical results.

\end{abstract}
\maketitle
\section{Introduction}

The notion of an ``abstract economy'' can be traced back to 1954 and Kenneth J. Arrow and Gerard Debreu's article~\cite{ArrowDebreu1954}.
The authors introduced a model which would subsequently become the namesake of an abstract economy and proved the existence of an equilibrium therein.
Both of the authors, independently, received Nobel Prizes in Economics for this work.

Since then, the notion of an abstract economy has been expanded by many authors, like Lionel W. McKenzie in~\cite{McKenzie}.
He significantly weakened many of Arrow and Debreu's assumptions and introduced a new mathematical framework, not too dissimilar from what we use today.
At present, abstract economies are often referred to as Generalized Nash Equilibrium Problems (GNEPs for short), highlighting their relationship to John Nash's work from 1951 in~\cite{Nash1951}.
Due to the similarity of concepts, equilibria in abstract economies are typically referred to as Nash equilibria.
For a more complete and modern presentation of Nash's non-cooperative games as well as abstract economies see also Tatsuro Ichiishi's classical book~\cite{Ichiishi_1983}.

\medskip Let us now make the model precise.
Let $N\geq 1$ be a fixed integer representing the number of players or participants in our economy, labelled with the integers $\{1,\dots,N\}$.
Each player $i$ has a \textit{decision space} $E_i$, typically assumed to be a metric space.
We denote by $E$ the cartesian product $E:= E_1\times\cdots\times E_N$ and call it \textit{global decision space}.
Player $i$ ``plays'' the abstract economy by selecting a decision $x_i\in E_i$.
An $N$-tuple $(x_1,\dots,x_n)\in E$ is called a \textit{global decision}.

Within the framework of an abstract economy, there may be restrictions imposed on the players.
Though the players choose their decision independently of each other, not every global decision is permissible.
A common reason for imposing restrictions could be, say in applications relating to physics, that certain global decisions would break physical limitations of the considered system.
This happens, for example, in gas network modelling, where gas pipes have a fixed maximum rated pressure, so that the total production of the companies in the network may not exceed this rated pressure at any point.
Mathematically this restriction manifests as a constraint on the solutions of certain partial differential equations, consult for example~\cite{Hintermueller2015}.

The restrictions on the players are modelled via a collection of $N$ set-valued maps given for $1\leq i \leq N$ by $X_i\colon E\longrightarrow 2^{E_i}$.
Here the notation $2^A$ stands for the power set of $A$.
We do not make a notational distinction between functions and set-valued maps, because we always accurately reflect the codomain of our maps.
The pointwise cartesian product of the set-valued maps $X_1,\dots,X_N$ is denoted by $X\colon E\longrightarrow 2^E$ and given by $X(x)=X_1(x)\times\cdots\times X_N(x)$.
The restrictions on the players are incorporated in our framework by requiring that for all $1\leq i\leq N$, the $i$-th player's selected decision $x_i\in E_i$ satisfies $x_i\in X_i(x)$, where $x:=(x_1,\dots,x_N)$.

Note that in this definition $X_i$ may also depend on the choice of the $i$-th player. This permissiveness can be seen as somewhat unnatural: How can the restrictions on player $i$ also simultaneously depend on his own choice? While this assumption does not cause any issues for our main result, many authors have required that $X_i(x)$ is independent of the $i$-th coordinate of $x$, and this assumption will be useful to us again at a later point.

Speaking of the set-valued maps $X_i$, the following assumptions are obviously necessary to be able to conclude anything productive:
\begin{compactitem}[\quad --]
	\item There exists $x\in E$ such that $x\in X(x)$. Otherwise, the players cannot all simultaneously satisfy their restrictions, and the abstract economy cannot be played.
	\item For all $1\leq i\leq N$ and for all $x\in E$ the set $X_i(x)\neq\emptyset$ is nonempty. Otherwise, a player may not have any admissible decisions, and once again the abstract economy cannot be played.
\end{compactitem}
So henceforth, we will require every abstract economy to satisfy these two (very mild) assumptions.

\medskip
The next part of the data in an abstract economy are the continuous \textit{payoff} or \textit{utility functions} denoted by $\theta_i\colon\operatorname{Graph}(X_i)\longrightarrow \R$.
These profit functions describe the preferences of the players of the abstract economy, who seek to maximize their personal profits.
Here, $\operatorname{Graph}(X_i)=\{(x,y)\in E\times E_i : y\in X_i(x)\}$ is the graph of the set-valued map $X_i$.
As before, one may notice that $E_i$ appears twice in the product $E\times E_i = E_1\times\cdots\times E_i\times\cdots\times E_N\times E_i$.
Some sources, for example~\cite{FacchineiKanzow2007}, require that $\theta_i$ does not depend on the $i$-th factor in $E$, but this special case is already included in our work.
Furthermore, often $\theta_i$ may be defined or trivially extended from $\operatorname{Graph}(X_i)\subseteq E\times E_i$ to all of $E\times E_i$.
In the definition we prefer to leave the domain of $\theta_i$ as $\operatorname{Graph}(X_i)$ to emphasize that the set-valued maps $X_i$ impose inviolable restrictions on the players.

Summarizing, we can define an $N$-player abstract economy as the collection of the aforementioned data $\mathcal{E}=(E_i,X_i,\theta_i\colon 1\leq i\leq N)$, where

\begin{compactitem}[\quad --]
	\item $(E_1,\dots,E_N)$ is a collection of non-empty metric spaces, with $E:=E_1\times \cdots\times E_N$,
	\item $(X_1,\dots,X_N)$  is a collection of set-valued maps where $X_i\colon E\longrightarrow 2^{E_i}$, for $1\leq i\leq N$, with $X\colon E\longrightarrow 2^E$ being the set-valued map $X(x):= X_1(x)\times\cdots\times X_N(x)$, and
	\item  $(\theta_1,\dots,\theta_N)$ is a collection of continuous functions $\theta_i\colon\operatorname{Graph}(X_i)\longrightarrow\R$, for $1\leq i\leq N$.
\end{compactitem}

The setup of non-cooperative games of Nash's original work~\cite{Nash1951} can be recovered from this definition: In our language, he considered $E_i$ to be the convex hull of finitely many points in finite-dimensional Euclidean space, so a convex polytope.
The fact that a non-cooperative does not pose restrictions on the global strategies is represented in our model by setting $X_i$ to be the constant set-valued map $X_i(x)=E_i\in 2^{E_i}$.
Thus, non-cooperative games are included as a special case of an abstract economy.

\medskip
An equilibrium of an abstract economy $\mathcal{E}$, often called \textit{social equilibrium} or \textit{Nash equilibrium},
is defined as a global decision $x=(x_1,\dots,x_N)\in E$ such that two conditions are fulfilled:
\begin{compactitem}[\quad --]
	\item for all $1\leq i \leq N$ the decision $x_i$ satisfies the $i$-th restriction, this means $x_i\in X_i(x)$, and
	\item for all $1\leq i \leq N$ and for all of player $i$'s admissible deviations $x_i'\in X_i(x)$ it holds that
	\begin{displaymath}
		\theta_i(x,x_i) \geq \theta_i(x,x_i'),
	\end{displaymath}
	meaning the payoff which player $i$ receives for any unilateral deviation away from $x$ is no greater than the payoff player $i$ would receive from the equilibrium $x$.
\end{compactitem}
As expected, in the special case of non-cooperative games, this definition exactly agrees with the notion of a Nash equilibrium of a non-cooperative game.
Indeed, the first condition is void, and the second condition is exactly the Nash optimality condition.

One of the reasons for our interest in Nash equilibria becomes obvious once we translate the conditions above into words. Indeed, in a Nash equilibrium no player by himself is incentivized to change his decision, since the second condition says that any deviation of a particular player could only lead to an equal or worse payoff for exactly that same player. In other words: No player can make a profit by changing their decision alone. Although there is no notion of time in an abstract economy, this reasoning can be taken as an argument for a Nash equilibrium being a ``stable'' state in the economy, justifying the name ``equilibrium''.

\medskip
Even though the applications of abstract economies were initially envisioned to relate to economics, Patrick T. Harker in his seminal publication~\cite{Harker1989} established a connection between GNEPs and quasi-variational inequalities, opening the door to applications of GNEPs in constrained optimization and operations research.
Since then, in the late $20^{\text{th}}$ and early $21^{\text{st}}$ century, GNEPs have found applications in diverse fields: From machine learning and prediction games~\cite{SchefferMicheal2009} to resource allocations in cloud systems~\cite{CloudSystems2011} and telecommunications systems, as presented in~\cite{FacchineiKanzow2007}.

\section{Statement of the Main Result}

\medskip
To get a better understanding of Nash equilibria of abstract economies while simultaneously illustrating yet another reason for our interest in them, we introduce a particular set-valued map which helps us capture the interests of the players: the so-called best-reply correspondence.
For a player $i$, where $1\leq i\leq N$, define
\begin{alignat*}{3}
	\varphi_i\colon & E\longrightarrow\mathbb{R}\cup\{\infty\}, & \qquad & x \longmapsto  \sup\{\theta_i(x, y_i) \colon y_i\in X_i(x)\},            \\
	\Phi_i\colon    & E\longrightarrow 2^{E_i},                 & \qquad & x \longmapsto    \{y_i\in X_i(x)\colon \theta_i(x,y_i) = \varphi_i(x)\}.
\end{alignat*}
With this definition, the set valued map $\Phi_i$ associates to each global decision $x$ the set of optimal responses of player $i$, which is why we say that $\Phi_i$ is the \textit{best-reply correspondence} of player $i$.
Note that it may well be that for some global decisions $x$ the value $\Phi_i(x)=\emptyset$ is empty.
As usual, we call the set-valued map $\Phi\colon E\longrightarrow 2^{E}$ defined as the cartesian product $\Phi(x)=\Phi_1(x)\times\cdots\times\Phi_N(x)$ the \textit{global best-reply correspondence}. We remark that the functions $\varphi_i$ are named \textit{marginal functions} of $\theta_i$.

Let us re-examine the Nash equilibrium condition. Note that for all $1\leq i \leq N$ the following equivalence holds
\begin{displaymath}
	(\forall x_i'\in X_i(x)) \ \theta_i(x,x_i) \geq \theta_i(x,x_i') \quad \Longleftrightarrow \quad \theta_i(x,x_i) = \sup\{\theta_i(x,x_i')\colon x_i'\in X_i(x) \} = \phi_i(x).
\end{displaymath}
Now the statement on the right-hand side is by definition equivalent to $x_i\in\Phi_i(x)$. Consequently, we obtain that $x$ is a Nash equilibrium if and only if $x\in\Phi(x)$. Once again, this fact seems almost obvious if we formulate it in words: In a Nash equilibrium, every player is responding optimally to everyone else's decision. This is another reason we care about the existence of Nash equilibria.

\medskip
For any set valued map $F\colon M\longrightarrow 2^{M}$, a point $m\in M$ such that $m\in F(m)$ is called a \textit{fixed point} of the set-valued map $F$.
So a natural approach to prove the existence of a Nash equilibrium would be to employ some sort of fixed-point theorem.
Historically, the famous Kakutani's fixed-point theorem was often used, for example in Ichiishi's book~\cite[Thm. 4.3.1]{Ichiishi_1983}.
This requires many restrictive assumptions like convexity and finite-dimensionality on the decision spaces $E_i$ and the profit functions $\theta_i$.
To obtain a more general result, we take a different approach.
For that we briefly review the relevant topological notions before stating our main result.

A metrizable space $X$ is an \textit{absolute neighborhood retract}, or ANR for short, if for any embedding $X\hookrightarrow Y$ of $X$ into a metrizable topological space $Y$ as a closed subspace, the subspace $X\subseteq Y$ is a neighborhood retract of $Y$.
For example, the class of locally compact simplicial complexes belongs to the class of ANRs.
Furthermore, any Euclidean neighborhood retract (ENR) is an ANR in the class of separable metric spaces and in the class of normal spaces.
For more insight on ANRs see for example \cite{Sze-tsen1965, Borsuk1967, Masrdesic1999}.

Let $\Bbbk$ be an arbitrary field. We say that a topological space $X$ is $\Bbbk$-acyclic with respect to a specified homology theory $H_{*}$ if its reduced homology with coefficients in $\Bbbk$ vanishes, i.e., $\widetilde{H}_{*}(X;\Bbbk) \cong 0$.
For example, all classifying spaces of finite groups are $\Bbbk$-acyclic with respect to the singular homology theory and for an appropriately chosen field $\Bbbk$, e.g., the real or rational numbers.
In particular, $\widetilde{H}_*(\R\mathrm{P}^{\infty};\mathbb{F}_3)\cong 0$ while $\widetilde{H}_i(\R\mathrm{P}^{\infty};\mathbb{F}_2)\cong \mathbb{F}_2$ for all $i\geq 1$.
In our main result we assume the Vietoris homology theory introduced by Leopold Vietoris in his 1927 paper \cite{Vietoris1927}.
For a detailed development of this homology theory consult the work of Robert Reed \cite{Reed1980}.
As pointed out in \cite[Prop.\,2.2]{Tesfatsion1983}, we can consider the singular homology theory instead.

\medskip
With both the game-theoretical and topological definitions laid out, we now state our main result on the existence of Nash equilibria in abstract economies.
For the sake of brevity, we only present the main version along with its most notable corollaries here.
In this form, it extends the result on non-cooperative games of Leigh Tesfatsion from her 1975 PhD Thesis, published in~\cite{Tesfatsion1983}.
We will also discuss alternate versions of the main result, as well as many corollaries which, on the one hand, re-prove known results, and on the other hand, may be more convenient to use in practice.

We say that a set-valued map $F\colon X\longrightarrow 2^Y$ is \textit{strict} if for all $x\in X$ the value $F(x)\neq\emptyset$ is non-empty, following the conventions from Aubin and Frankowska's book~\cite{Aubin_Frankowska_2009}. The precise definitions of the remaining notions, can also be found in the same book~\cite[Chap.\,1.3,\,1.4]{Aubin_Frankowska_2009}.
\begin{theorem}\label{thm:MainResult}
	Let $\Bbbk$ be a fixed field and let $N\geq 1$ be an integer. Suppose that $\mathcal{E}=(E_i, X_i, \theta_i\colon 1\leq i \leq N)$ is an abstract economy such that
	\begin{compactenum}[\rm \quad (1)]
		\item the set valued map $X\colon E\longrightarrow 2^{E}$ is strict, compact-valued and both upper and lower semicontinuous,
		\item the function $\theta_i\colon\operatorname{Graph}(X_i)\longrightarrow\mathbb{R}$ is continuous for all $1\leq i \leq N$, and
		\item $E$ is a compact, $\Bbbk$-acyclic ANR and $\Phi(x)$ is $\Bbbk$-acyclic for every $x\in E$,
	\end{compactenum}
	where $\Phi$ denotes the global best-reply correspondence of $\mathcal{E}$.

	\noindent Then $\mathcal{E}$ has at least one Nash equilibrium.
\end{theorem}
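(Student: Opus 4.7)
The plan is to reformulate the Nash equilibrium condition as a coincidence problem between two continuous single-valued maps and apply the Eilenberg--Montgomery coincidence theorem in Vietoris homology. Recall from Section~2 that $x \in E$ is a Nash equilibrium exactly when $x \in \Phi(x)$, so it suffices to produce a fixed point of the global best-reply correspondence.

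The first step is to extract from hypotheses (1) and (2) that $\Phi$ itself is strict, compact-valued and upper semicontinuous. Fixing a player $i$, the restriction map $X_i$ is continuous (both upper and lower semicontinuous) with non-empty compact values, and $\theta_i$ is continuous on $\operatorname{Graph}(X_i)$; a direct application of Berge's maximum theorem then yields that the marginal function $\varphi_i$ is continuous and that the best-reply correspondence $\Phi_i$ is upper semicontinuous with non-empty compact values. The pointwise cartesian product $\Phi = \Phi_1 \times \cdots \times \Phi_N$ inherits these three properties. Consequently the graph $Y := \operatorname{Graph}(\Phi)$ is closed in the compact product $E \times E$, and hence is itself compact.

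To set up the coincidence framework, let $p, q \colon Y \longrightarrow E$ denote the projections onto the first and second factors. A coincidence $p(x, y) = q(x, y)$ is equivalent to $y = x \in \Phi(x)$, i.e.\ to a Nash equilibrium of $\mathcal{E}$. The map $p$ is continuous, surjective (by strictness of $\Phi$), and each fiber $p^{-1}(x) \cong \Phi(x)$ is compact and $\Bbbk$-acyclic by hypothesis (3); hence $p$ is a Vietoris map. Together with the fact that $E$ is a compact $\Bbbk$-acyclic ANR (again by (3)), this is precisely the configuration in which the Eilenberg--Montgomery coincidence theorem applies, and it delivers the required coincidence point.

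The main technical obstacle is verifying the Vietoris-map property for $p$ in the precise form demanded by Eilenberg--Montgomery, and then pushing the coincidence problem through the Vietoris--Begle theorem so that it reduces to an endomorphism of $\widetilde{H}_{*}(E;\Bbbk) \cong 0$; it is the acyclicity of $E$ that forces the nontrivial Lefschetz-type invariant guaranteeing a coincidence. The use of Vietoris homology together with the ANR hypothesis on $E$ is what permits this inversion of $p_{*}$ on homology without requiring $Y$ itself to be an ANR, which is the crucial point at which the argument diverges from the classical Kakutani-type approach.
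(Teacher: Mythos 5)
Your proposal follows essentially the same route as the paper: Berge's maximum theorem to show $\Phi$ is strict, compact-valued and upper semicontinuous; compactness of $\operatorname{Graph}(\Phi)$ via the closed graph lemma; and the Eilenberg--Montgomery coincidence theorem applied to the two projections of $\operatorname{Graph}(\Phi)$ onto $E$, with the first projection having the Vietoris property because its fibers are the $\Bbbk$-acyclic sets $\Phi(x)$. Two steps are asserted where the paper does a little extra work: the semicontinuity, strictness and compact-valuedness of each individual $X_i$ must first be deduced from the hypothesis on the product map $X$ (the paper writes $X_i$ as the composition of $X$ with the projection-induced map $E\longrightarrow 2^{E_i}$), and Berge's theorem in the form used requires $\theta_i$ to be continuous on all of $E\times E_i$ rather than just on $\operatorname{Graph}(X_i)$, which the paper arranges by showing $\operatorname{Graph}(X_i)$ is closed and invoking the Tietze--Urysohn extension theorem. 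Your closing paragraph about pushing the coincidence through the Vietoris--Begle theorem describes the internals of Eilenberg--Montgomery rather than a step of this proof and is not needed once that theorem is cited as a black box.
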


\subsection{Examples}

In order to demonstrate that our new result is a genuine generalization, we provide constructions of abstract economies which previous existence results don't apply to due to an absence of convexity properties, but can be treated using our new result.

\medskip
We begin with a simple example which demonstrates that we are now able to treat more general profit functions with our new result.

\begin{example}
	Consider a $2$-player abstract economy in admissible form given by $E_1 = E_2 = [0, 1]^2$ and $C=D^2$, the unit disk in $\mathbb{R}^2$, which is a subset of the cube $[0, 1]^2$. Let $\theta_1(x, y) = -\min\{|x_1-y_1|, |x_2-y_2|\}$, and let $\theta_2$ be a quasi-concave continuous function on $\mathbb{R}^2\times\mathbb{R}^2$. This implies that the best-reply correspondence $\Phi_2$ is convex-valued.
	As for the best-reply correspondence $\Phi_1$, it is a star-like shape centered at $y$. So while it is not convex, it is contractible and therefore acyclic. Consequently, Theorem~\ref{thm:MainResult} is still applicable, proving that the abstract economy just described has a Nash equilibrium.
\end{example}

\begin{example}
	Consider a two-player game with strategy spaces $E_1 = E_2 = D(\xi)$, where $\xi$ is a Euclidean real vector bundle of dimension at least $2$ over the real projective space $\mathbb{R}\mathbb{P}^{2n}$ and $D(\xi)$ denotes the total space of its unit disk bundle. Consider the set-valued maps $X_1\colon E\longrightarrow E_1$ and $X_2\colon E\longrightarrow E_2$ given by
	\begin{displaymath}
		X_1((\ell_1, v_1), (\ell_2, v_2)) :=  \{ (\ell_1, w) : \delta \cdot e^{-\lVert v_2 \rVert} \leq \lVert w \rVert \leq e^{-\lVert v_2 \rVert}\},
	\end{displaymath}
	where $\delta\in(0, 1)$ is a constant. Similarly,
	\begin{displaymath}
		X_2((\ell_1, v_1), (\ell_2, v_2)) := \{ (\ell_2, w) : \varepsilon \cdot e^{-\lVert v_1 \rVert} \leq \lVert w \rVert \leq e^{-\lVert v_1 \rVert}\},
	\end{displaymath}
	where $\varepsilon\in(0, 1)$ is a constant. Consider the profit functions
	\begin{displaymath}
		\theta_1\colon \operatorname{Graph}(X_1) \longrightarrow\mathbb{R} \qquad \text{and} \qquad \theta_2\colon \operatorname{Graph}(X_2) \longrightarrow\mathbb{R}
	\end{displaymath}
	defined by
	\begin{displaymath}
		\theta_1((\ell_1, v_1), (\ell_2, v_2))  = \frac{\langle v_1, v_2 \rangle}{\lVert v_1\rVert\lVert v_2 \rVert}, \qquad
		\theta_2((\ell_1, v_1), (\ell_2, v_2))  = \frac{\langle v_1, v_2 \rangle}{\lVert v_1\rVert\lVert v_2 \rVert}.
	\end{displaymath}
	In global strategies relevant for the equilibrium, we cannot have the denominator equal $0$. So for formal completeness, in the case that one of the vectors is zero, we define the profit function to be $0$. Though this does not matter for our analysis.
	Consequently, the marginal functions evaluate to $1$, because the players are maximizing the cosine of an angle between nonzero vectors. Hence, the best-reply correspondence of the first player is:
	\begin{align*}
		\Phi_1((\ell_1, v_1), (\ell_2, v_2)) & = \Big\{(\ell, u) \in X_1((\ell_1, v_1), (\ell_2, v_2)) : \frac{\langle u, v_2 \rangle}{\lVert u\rVert\lVert v_2 \rVert} = 1 \Big\}                                                   \\
		                                     & = \Big\{(\ell_1, u) : \delta \cdot e^{-\lVert v_2 \rVert} \leq \lVert u \rVert \leq e^{-\lVert v_2 \rVert}, \frac{\langle u, v_2 \rangle}{\lVert u\rVert\lVert v_2 \rVert} = 1 \Big\} \\
		                                     & = \Big\{(\ell_1, \lambda v_2) : \frac{\delta \cdot e^{-\lVert v_2 \rVert}}{\lVert v_2\rVert} \leq \lambda \leq \frac{e^{-\lVert v_2 \rVert}}{\lVert v_2\rVert}  \Big\},
	\end{align*}
	which is just a closed interval, and therefore contractible. Analogously, the best-reply correspondence of the second player is also a closed interval.

	Since the total space of the disk bundle has contractible fibers, it is homotopy equivalent to the base space $\mathbb{R}\mathbb{P}^{2n}$, which in this case is for example $\mathbb{F}_3$-acyclic.

	Thus, the assumptions of Theorem~\ref{thm:MainResult} are satisfied, implying that the abstract economy we just constructed has a Nash equilibrium.
\end{example}

\begin{example}
	Our next example is a $2$-player abstract economy in admissible form and demonstrates an application of Theorem~\ref{thm:MainResultAdmissibleForm}. Let $n\geq 1$ be an integer and let $\xi$ be a vector bundles over $\mathbb{R}\mathbb{P}^{2n}$. Set $E_1=E_2= E(\xi)$ to be the total space of a Euclidean real vector bundle $\xi$.

	Define the set of admissible strategies to be $C:= D(\xi\oplus \xi)$, where $\xi\oplus \xi$ is Whitney sum, and $D$ stands for the total space of the disk bundle.
	More precisely, the total space of the disk bundle, as a space, is given as follows
	\begin{displaymath}
		D(\xi\oplus \xi) = \{ ((\ell, v_1), (\ell, v_2)) \in E(\xi\oplus \xi)  \subseteq E_1 \times E_2 : \lVert v_1 \rVert^2 + \lVert v_2 \rVert^2 \leq 1 \}.
	\end{displaymath}
	Intuitively speaking, we require both players to play in fibers over the same line and the sum of the norms of their choices must be at most $1$.

	The profit functions of the players are given by
	\begin{displaymath}
		\theta_1((\ell_1, v_1), (\ell_2, v_2)) := \langle v_1, v_2\rangle + \lVert v_1\rVert, \qquad \theta_2((\ell_1, v_1), (\ell_2, v_2)) := \langle v_1, v_2\rangle + \lVert v_2\rVert.
	\end{displaymath}
	With the strategy spaces, set of admissible strategies, and profit functions defined, we are ready to apply our result on abstract economies, Theorem~\ref{thm:MainResultAdmissibleForm}.

	\medskip
	First, $C$ is compact, as a fiber bundle with compact fibers over a compact base space. Next, the local triviality of the disk bundle together with Theorem~\ref{thm:LCTVSANRconditionsPalais} shows that $C$ is an ANR. Since the fibers are contractible, we know that $C = D(\mathbb{R}\mathbb{P}^{2n}) \simeq \mathbb{R}\mathbb{P}^{2n}$ is homotopy equivalent to real projective space of even dimension, which is acyclic e.g., with respect to the finite field with three elements $\mathbb{F}_3$.

	Next, the best-reply correspondence $\Phi_1$ is just single point valued.
	This follows immediately from its definition and the observation that the maximum of the scalar product, when $v_2$ is fixed, is attained whenever $v_1 =\lambda v_2$ for $\lambda > 0$, and the maximum of the norm is attained whenever $\lVert v_1 \rVert^2 + \lVert v_2 \rVert^2 = 1$.
	There is only one vector $v_1$ satisfying both of these requirements, for any fixed $v_2$. The same, symmetric argument applies for $v_2$.

	It remains only to verify that the induced admissible set-valued maps $X_i\colon E\longrightarrow 2^{E_i}$, $1\leq i \leq 2$, defined by the admissible set $C = D(\xi\oplus \xi)$ are continuous as set-valued maps.

	Upper semicontinuity is evident: It follows from the Closed Graph Lemma~\ref{lem:ClosedGraphLemma} and the fact that $C$, the graph of $X$ is compact, so in particular closed.

	Lower semicontinuity is verified by definition. For that, recall that in an abstract economy in admissible form the set-valued maps $X_i\colon E\longrightarrow 2^{E_i}$, $1\leq i \leq 2$, are given by
	\[
		X_i(x):= \{y_i\in E_i\colon (x_1,\dots,x_{i-1}, y_i, x_{i+1}, \dots, x_N)\in C\},
	\]
	where $x\in E$.
	Take $i=1$ since the case $i=2$ is analogous.

	Let $((\ell, v_1), (\ell, v_2)) \in C$ be arbitrary and let $\{(\ell^{(n)}, v_2^{(n)})\}_{n\geq 1} \to (\ell, v_2)$ be an arbitrary sequence in $E_2$ converging to $(\ell, v_2)$.
	Now, we have to produce a converging sequence $\{(\ell^{(n)}, v_1^{(n)})\}_{n\geq 1} \to (\ell, v_1)$ with the property that for all $n\geq 1$ we have $(\ell^{(n)}, v_1^{(n)})\in X_1(\ell^{(n)}, v_2^{(n)})$.

	As indicated by our notation, the requirement that $(\ell^{(n)}, v_1^{(n)})\in X_1(\ell^{(n)}, v_2^{(n)})$ forces us to consider the  sequence $\{\ell^{(n)}\}_{n\geq 1}$ as the desired sequence in $E_2$.
	For the choice of the sequence $\{v_2^{(n)}\}_{n\geq 1}$, local triviality of the bundle $\xi$ comes into play.
	By selecting a suitable trivializing neighborhood of $(\ell, v_2)$ in $E(\xi)$ we may assume that the  sequence $\{(\ell^{(n)}, v_2^{(n)})\}_{n\geq 1}$ belongs to the neighborhood $D^n\oplus D^m$ of  $(\ell, v_2)$.
	Here $D^m$ is the fiber of the disk bundle over $\ell$.
	For our sequence, select $v_1^{(n)}\in D^m$ to be the closest point to $v_1$ such that $\lVert v_1^{(n)}\rVert \leq 1-\lVert v_2^{(n)}\rVert$, to ensure that the sequence remains inside $C$. The reason for not taking just a constant sequence $v_1^{(n)}=v_1$ is due to the definition of $C$, that is, $ \lVert v_1 \rVert^2 + \lVert v_2 \rVert^2 \leq 1$.

	\medskip
	In conclusion, we've verified all the requirements of Theorem~\ref{thm:MainResult}, which shows that the abstract economy $\mathcal{E}$ defined in this example has a Nash equilibrium. Note that our strategy spaces are not even contractible, so results relying on convexity properties would not be applicable.
\end{example}

\subsection{Known results as corollaries}

Let us establish that Theorem~\ref{thm:MainResult} does, in fact, include known results as special cases. To begin, we present the famous existence result from Ichiishi~\cite[Thm.\,4.3.1]{Ichiishi_1983}, paraphrased in the notation we've introduced so far.

\begin{corollary}\label{cor:Ichiishi_1983}
	Let $N\geq 1$ be an integer and let $\mathcal{E}=(E_i,X_i,\theta_i : 1\leq i\leq N)$ be an abstract economy.
	If
	\begin{compactenum}[\quad \rm(1)]
		\item  $E_i\neq \emptyset$ is a convex and compact subset of a Euclidean space for all $1\leq i \leq N$,
		\item  $X_i\colon E\longrightarrow 2^{E_i}$ is strict, both upper and lower semicontinuous, and convex-valued for all $1\leq i \leq N$,
		\item $\theta_i\colon\operatorname{Graph}(X_i)\longrightarrow\R$ is continuous for all $1\leq i \leq N$, and
		\item the restriction function $\theta_i|_{\{x\}\times X_i(x)}\colon \{x\}\times X_i(x)\longrightarrow \R$ given by $(x,y)\longmapsto \theta_i(x,y)$ is quasi-concave for every $1\leq i \leq N$ and every $x\in E$,
	\end{compactenum}
	then the abstract economy $\mathcal{E}$ has a Nash equilibrium.
\end{corollary}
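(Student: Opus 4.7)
The plan is to deduce Corollary~\ref{cor:Ichiishi_1983} directly from Theorem~\ref{thm:MainResult} by verifying that each of its three hypotheses is implied by the assumptions of the corollary. For the topological part of hypothesis (3), the global decision space $E = E_1\times\cdots\times E_N$ is a compact convex subset of Euclidean space; any such set is contractible, hence $\Bbbk$-acyclic over any field and for either the Vietoris or the singular homology theory, and it is moreover an absolute retract (for instance by Dugundji's extension theorem), in particular an ANR.

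Next, the set-valued map $X$ must be seen to be strict, compact-valued, and both upper and lower semicontinuous. Strictness follows from strictness of each $X_i$; compact-valuedness, since each $X_i(x)$ is a closed subset of the compact set $E_i$ and finite products of compact sets are compact. For the semicontinuity properties it is a standard fact (see~\cite[Chap.~1.4]{Aubin_Frankowska_2009}) that a finite Cartesian product of strict, compact-valued, upper (resp.\ lower) semicontinuous set-valued maps with a common source is again upper (resp.\ lower) semicontinuous. This settles hypothesis (1) of Theorem~\ref{thm:MainResult}, and hypothesis (2) is assumed verbatim.

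The heart of the argument --- and the step that genuinely uses the quasi-concavity assumption --- is to show that $\Phi(x)$ is $\Bbbk$-acyclic for every $x\in E$. Fix $x\in E$ and $1\leq i\leq N$. Since $\theta_i(x,\cdot)$ is continuous and $X_i(x)$ is a nonempty compact set, the Weierstrass extreme-value theorem guarantees that
\begin{displaymath}
    \Phi_i(x) = \{y_i\in X_i(x) : \theta_i(x,y_i) = \varphi_i(x)\}
\end{displaymath}
is nonempty. Writing $\Phi_i(x)$ as the intersection of $X_i(x)$ with the upper level set $\{y_i : \theta_i(x,y_i)\geq \varphi_i(x)\}$, the quasi-concavity of $\theta_i(x,\cdot)$ together with the convexity of $X_i(x)$ shows that $\Phi_i(x)$ is itself convex, hence contractible, hence $\Bbbk$-acyclic. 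The Künneth formula then yields that $\Phi(x) = \Phi_1(x)\times\cdots\times\Phi_N(x)$ is acyclic as well, completing hypothesis (3). Theorem~\ref{thm:MainResult} now supplies a Nash equilibrium of $\mathcal{E}$.

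The only genuinely delicate point is the acyclicity of $\Phi(x)$. The argument above tacitly requires $X_i(x)$ to be convex --- without this, the intersection defining $\Phi_i(x)$ need not be convex and quasi-concavity on $X_i(x)$ provides no useful structure. Convex-valuedness of each $X_i$ is therefore implicit in the formulation of the corollary, matching the usual hypothesis of Ichiishi's theorem~\cite[Thm.~4.3.1]{Ichiishi_1983}.
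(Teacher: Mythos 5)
Your proof is correct and follows essentially the same route as the paper: both arguments verify the hypotheses of Theorem~\ref{thm:MainResult}, obtaining $\Bbbk$-acyclicity of $E$ from compact convexity (the paper cites Theorem~\ref{thm:LCTVSANRconditionsPalais} for the ANR part, you cite Dugundji's extension theorem --- an immaterial difference) and $\Bbbk$-acyclicity of each $\Phi_i(x)$ from the fact that the argmax set of a quasi-concave function on a convex compact set is convex, exactly as in Lemma~\ref{lem:QuasiConcaveMaximumSet}. Your closing remark about the implicit convex-valuedness of $X_i$ is well taken: the paper relies on it too, since quasi-concavity of $\theta_i(x,\cdot)$ on $X_i(x)$ only makes sense when $X_i(x)$ is convex, and the proof of Lemma~\ref{lem:QuasiConcaveMaximumSet} uses convexity of its domain without listing it among the hypotheses.
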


For completeness' sake let us mention that a real function $f\colon V\longrightarrow\mathbb{R}$, defined on a convex subset $V$ of a Euclidean space, is called \textit{quasi-concave}, if for all $x,y\in V$ and all $\lambda\in[0,1]$ we have that $f(\lambda x + (1-\lambda)y) \geq \min\{f(x),f(y)\}$. In particular, all concave functions are also quasi-concave.

\medskip
Many special instances of the following corollary are central to modern applications in the realm of constrained multicriteria optimization, and in particular optimization in the context of partial differential equations.
In these applications, subspaces of function spaces are considered, which is why we address them specifically.

\begin{corollary}\label{cor:FunctionSpacesCorollary}
	Let $N\geq 1$ be an integer and let $\mathcal{E}=(E_i,X_i,\theta_i : 1\leq i\leq N)$ be an abstract economy.
	If
	\begin{compactenum}[\quad \rm(1)]
		\item  $E_i\neq \emptyset$ is a metrizable compact convex subset of a locally convex topological vector space for every $1\leq i \leq N$,
		\item  $X_i\colon E\longrightarrow 2^{E_i}$ is strict, both upper and lower semicontinuous and closed-valued for every $1\leq i \leq N$,
		\item $\theta_i\colon\operatorname{Graph}(X_i)\longrightarrow\R$ is continuous for all $1\leq i \leq N$, and
		\item the restriction function $\theta_i|_{\{x\}\times X_i(x)}\colon \{x\}\times X_i(x)\longrightarrow \R$ given by $(x,y)\longmapsto \theta_i(x,y)$ is quasi-concave for every $1\leq i \leq N$ and every $x\in E$,
	\end{compactenum}
	then the abstract economy $\mathcal{E}$ has a Nash equilibrium.
\end{corollary}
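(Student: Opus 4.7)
The plan is to deduce Corollary~\ref{cor:FunctionSpacesCorollary} directly from Theorem~\ref{thm:MainResult} by checking the three hypotheses of the latter, for $\Bbbk$ an arbitrary fixed field. Only one step is non-elementary, namely the verification that the compact convex sets arising from the locally convex setting are ANRs; the remaining checks reduce to formal properties of set-valued maps and to the fact that convex sets are contractible.

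Hypothesis~(1) of the theorem will be immediate. Strictness of $X$ is inherited from strictness of each $X_i$; since each $X_i(x)$ is closed inside the compact space $E_i$, it is itself compact, and therefore the pointwise product $X(x)=X_1(x)\times\cdots\times X_N(x)$ is compact-valued. Upper and lower semicontinuity of $X$ follow from the corresponding properties of the $X_i$ by the standard fact that a cartesian product of upper (resp.\ lower) semicontinuous compact-valued maps is again upper (resp.\ lower) semicontinuous. Hypothesis~(2) is copied verbatim from the corollary.

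The main content lies in hypothesis~(3). Compactness of $E=E_1\times\cdots\times E_N$ is clear. For $\Bbbk$-acyclicity, I would observe that each $E_i$, being convex, is contractible via the straight-line homotopy inside its ambient LCTVS, hence $E$ is contractible and thus $\Bbbk$-acyclic with respect to Vietoris (or singular) homology. The main obstacle is the ANR condition: in the finite-dimensional situation of Corollary~\ref{cor:Ichiishi_1983} this would be automatic from $E$ being an ENR, but in the present generality one must invoke Dugundji's extension theorem, which asserts that any metrizable convex subset of a locally convex topological vector space is an absolute retract. Applied to $E$, regarded as a metrizable compact convex subset of the product LCTVS $V_1\times\cdots\times V_N$ (with $E_i\subseteq V_i$), this yields that $E$ is an AR and \emph{a fortiori} an ANR.

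It remains to establish $\Bbbk$-acyclicity of $\Phi(x)$ for every $x\in E$. Continuity of $\theta_i(x,\cdot)$ together with compactness of $X_i(x)$ ensures that $\varphi_i(x)$ is attained, so $\Phi_i(x)$ is non-empty, and $\Phi_i(x)$ is closed inside $X_i(x)$, hence compact. Quasi-concavity of $\theta_i(x,\cdot)$ on the convex set $X_i(x)$ implies that the upper level set $\{y_i\in X_i(x):\theta_i(x,y_i)\geq\varphi_i(x)\}$ is convex, and this set is precisely $\Phi_i(x)$. Therefore each $\Phi_i(x)$ is a non-empty compact convex subset of the ambient LCTVS, hence contractible and thus $\Bbbk$-acyclic; the product $\Phi(x)=\Phi_1(x)\times\cdots\times\Phi_N(x)$ is convex for the same reason and also $\Bbbk$-acyclic. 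With all three hypotheses in hand, Theorem~\ref{thm:MainResult} produces a Nash equilibrium.
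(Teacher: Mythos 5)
Your proposal is correct and follows the same overall strategy as the paper: verify the three hypotheses of Theorem~\ref{thm:MainResult}, with convexity supplying both contractibility (hence $\Bbbk$-acyclicity) of $E$ and of the values $\Phi(x)$. The one genuine divergence is the ANR step: the paper invokes Palais' local criterion (Theorem~\ref{thm:LCTVSANRconditionsPalais}), under which a convex metrizable subset of a locally convex space trivially qualifies, whereas you invoke Dugundji's extension theorem to conclude that $E$ is an absolute retract. Both routes are standard and valid; Dugundji's theorem gives the slightly stronger AR conclusion directly, while Palais' criterion is local and hence better adapted to the non-convex ANRs the main theorem is designed to reach. Two further minor differences: you verify the product properties of $X$ (strictness, compact-valuedness, semicontinuity) explicitly, which the paper leaves implicit, and you obtain convexity of $\Phi_i(x)$ via the upper-level-set characterization of quasi-concavity rather than via Lemma~\ref{lem:QuasiConcaveMaximumSet}; these are equivalent arguments. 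Note that both you and the paper tacitly assume $X_i(x)$ is convex so that quasi-concavity of $\theta_i(x,\cdot)$ on it is even defined --- this is an imprecision in the corollary's hypotheses rather than in your argument.
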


A further specialization of this corollary arises when all $E_i$ are closed, bounded and convex subspaces of separable reflexive Banach spaces $V_i$.
In this case we consider $E_i$ to have the subspace topology of the weak topology on $V_i$.
Topologized in this way, $E_i$ are compact, metrizable and locally convex.
If we add the assumption that $X_i$ are convex-valued, then they also are compact-valued with respect to the weak topology. Assuming $X_i$ to be upper and lower semicontinuous with respect to the weak topology and that $\theta_i$ are continuous with respect to the weak topology, we arrive within the scope of Corollary~\ref{cor:FunctionSpacesCorollary}.

With this observation, we have recovered numerous state-of-the-art existence results, like \cite[Thm.\,2.3]{Hintermueller2013}, \cite[Thm.\,3.4]{Hintermueller2015}, \cite[Sec.\,2]{Hintermueller2022}, \cite[Thm.\,5]{Hintermueller2023}, and \cite[Thm.\,4.1]{Hintermuller2023preprint}.
Consequently, we have significantly expanded all known Nash equilibrium existence results.

The compactness of all the spaces $E_i$, and consequently of the space $E$ is a critical assumption for any result aiming to establish the existence of a Nash equilibrium. Though at first glance some results may look like they do not assume the compactness of $E$, it always comes as a consequence of the remaining assumptions.

In fact, we may construct a very simple symmetric two-player zero-sum non-cooperative game where $E$ is not compact which does not have a Nash equilibrium.
Consider the two-player non-cooperative game called the \textit{Bigger Number Game}, given by $E_1=E_2=\mathbb{N}$ with the discrete topology, and where the profit functions $\theta_i\colon E\times E_i\longrightarrow \mathbb{R}$ are defined as follows:
\begin{displaymath}
	\theta_1((x_1,x_2),y_1) =  \begin{cases}
		1,           & y_1 > x_2, \\
		\frac{1}{2}, & y_1 = x_2, \\
		0,           & y_1 < x_2,
	\end{cases} \qquad \text{and} \qquad
	\theta_2((x_1,x_2),y_2) =  -\theta_1((x_1,y_2), x_1).
\end{displaymath}
In words, both players select a natural number. The player with the larger number gets a profit of $1$, the other $0$. If the numbers are equal both players get a profit of $\tfrac{1}{2}$.
In this game, the sets $E_1,E_2=\mathbb{N}$ are not compact. Thanks to the discrete topology on $\mathbb{N}$, the profit functions are indeed continuous.
Clearly this game does not have a Nash equilibrium, since one player would always profit from changing his chosen number to be larger than the opponent's number. More precisely, if the global strategy $(x_1,x_2)$ is chosen, and if for example $x_1\leq x_2$, then player $1$ could unilaterally increase his profit by choosing any number $x_1' > x_2$.

\medskip This paper builds on the results and considerations in the Master's Thesis~\cite{Blagojevic2024}.

\medskip\noindent
\textbf{Acknowledgements.}
The first author would like to thank Dirk Werner for numerous discussions, suggestions and his continued support as well as Volker John for his support.

\medskip

\section{From a GNEP to a coincidence problem}

A key step for proving any result on the existence of a Nash equilibrium, so in particular for proving our main result too, is developing some kind of existence criterion which is more approachable than the definition of a Nash equilibrium itself. In our case, the criterion will be phrased in terms of the coincidence of two particular maps. And then, we will show how we can get an even simpler criterion for a special subclass of abstract economies.

\subsection{Coincidence criterion for abstract economies}
As preparation, we first collect some definitions we've encountered so far in one place for easier reference and precision in exposition.

\begin{definition}\label{def:AbstractEconomyAndRelatedNotions}
	An $N$-player \textit{abstract economy} $\mathcal{E}=(E_i,X_i,\theta_i\colon 1\leq i\leq N)$ is given as a collection of the following data:

	\begin{compactitem}[\quad --]
		\item $(E_1,\dots,E_N)$ is a family of non-empty metric spaces, with $E:=E_1\times \cdots\times E_N$,
		\item $(X_1,\dots,X_N)$ is a family of set-valued maps where $X_i\colon E\longrightarrow 2^{E_i}$. Additionally, we denote by $X\colon E\longrightarrow 2^E$ the product set-valued map $X(x):= X_1(x)\times\cdots\times X_N(x)$,
		\item $(\theta_1,\dots,\theta_N)$ is a family of continuous functions $\theta_i\colon\operatorname{Graph}(X_i)\longrightarrow\R$, for $1\leq i\leq N$.
	\end{compactitem}

	\medskip\noindent
	A global decision $x \in E$ is called \textit{social equilibrium} or \textit{Nash equilibrium} if
	\begin{compactitem}[\quad --]
		\item for all $1\leq i \leq N$ we have $x_i\in X_i(x)$, and
		\item for all $1\leq i \leq N$ and all $x_i'\in X_i(x)$ it holds that
		\begin{displaymath}
			\theta_i(x,x_i) \geq \theta_i(x,x_i').
		\end{displaymath}
	\end{compactitem}

	\medskip\noindent
	The \textit{best-reply correspondence of player} $i$ is the set-valued map $\Phi_i\colon E\longrightarrow 2^{E_i}$, given by:
	\begin{alignat*}{3}
		\varphi_i\colon & E\longrightarrow\mathbb{R}\cup\{\infty\}, & \qquad & x \longmapsto  \sup\{\theta_i(x, y_i) \colon y_i\in X_i(x)\},            \\
		\Phi_i\colon    & E\longrightarrow 2^{E_i},                 & \qquad & x \longmapsto    \{y_i\in X_i(x)\colon \theta_i(x,y_i) = \varphi_i(x)\}.
	\end{alignat*}
	We denote by $\Phi(x):=\Phi_1(x)\times\cdots\times\Phi_N(x)\colon E\longrightarrow 2^{E}$ the so-called \textit{global best-reply correspondence}. Recall that the functions $\varphi_i$ are often called the \textit{marginal functions} of $\theta_i$.
\end{definition}

We have already noted that the Nash equilibrium condition on a global strategy $x\in E$ is equivalent to the requirement that $x\in\Phi(x)$, which suggests an application of fixed-point theorems for establishing the existence of a Nash equilibrium.

Instead, let us briefly investigate the relations between the defined objects.
We have the graph $\operatorname{Graph}(\Phi)\subseteq E\times E$ and the (restrictions of the) canonical projections $\pi_1,\pi_2\colon\operatorname{Graph}(\Phi)\longrightarrow E$.
Assume for a moment that $\pi_1$ and $\pi_2$ have a coincidence, that means there exists a point $(x,y)\in\operatorname{Graph}(\Phi)$ such that $\pi_1(x,y) = \pi_2(x,y)$.
In other words, $x=y$, which yields a point $(x,x)\in\operatorname{Graph}(\Phi)$, or equivalently the fixed point $x\in \Phi(x)$.
This, in turn, is equivalent to $x$ being a Nash equilibrium of $\mathcal{E}$.
In conclusion, we've just proven the following criterion for the existence of a Nash equilibrium in an abstract economy.

\begin{lemma}[Coincidence criterion]\label{lem:CoincidenceCriterion1}
	Let $N\geq 1$ be an integer, $\mathcal{E}=(E_i,X_i,\theta_i : 1\leq i\leq N)$ an abstract economy and let $\Phi$ be the global best-reply correspondence of $\mathcal{E}$. Denote by
	\begin{displaymath}
		\pi_1 \colon \operatorname{Graph}(\Phi)\longrightarrow E \quad \text{and}\quad \pi_2 \colon \operatorname{Graph}(\Phi)\longrightarrow E
	\end{displaymath}
	the restrictions of the canonical projections $E\times E\longrightarrow E$. Then $\mathcal{E}$ has a Nash equilibrium if and only if $\pi_1$ and $\pi_2$ have a coincidence on $\operatorname{Graph}(\Phi)$.
\end{lemma}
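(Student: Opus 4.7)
The plan is to observe that this is essentially a tautological unpacking of definitions, hinging on two equivalences already established in the preceding discussion: first, that a coincidence of the projections translates into a fixed point of $\Phi$, and second, that fixed points of $\Phi$ are exactly Nash equilibria of $\mathcal{E}$. So the proof will consist of stringing these two equivalences together, in both directions.

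First I would prove the forward direction. Assume $x = (x_1,\dots,x_N) \in E$ is a Nash equilibrium of $\mathcal{E}$. The equivalence derived right after Definition~\ref{def:AbstractEconomyAndRelatedNotions} (and also established in the paragraph introducing $\Phi_i$) shows that for each $i$ the Nash optimality condition on $x$ amounts to $x_i \in \Phi_i(x)$. Taking the product over $i$ yields $x \in \Phi(x)$, hence $(x,x) \in \operatorname{Graph}(\Phi)$. Then $\pi_1(x,x) = x = \pi_2(x,x)$, so $\pi_1$ and $\pi_2$ coincide at the point $(x,x)$.

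For the converse direction, suppose $(x,y) \in \operatorname{Graph}(\Phi)$ satisfies $\pi_1(x,y) = \pi_2(x,y)$. By definition of the canonical projections, this means $x = y$, so $(x,x) \in \operatorname{Graph}(\Phi)$, which by the definition of the graph is the statement $x \in \Phi(x)$. Unpacking the product structure $\Phi(x) = \Phi_1(x) \times \cdots \times \Phi_N(x)$ gives $x_i \in \Phi_i(x)$ for every $1\leq i\leq N$, which by the same equivalence used above means that $x_i \in X_i(x)$ and $\theta_i(x,x_i) = \varphi_i(x) \geq \theta_i(x,x_i')$ for all $x_i' \in X_i(x)$. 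These are precisely the two conditions defining a Nash equilibrium.

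There is no real obstacle here: everything reduces to reading off definitions, and the only subtle point worth flagging explicitly is that a coincidence in $\operatorname{Graph}(\Phi)$ automatically has the form $(x,x)$ with $x \in \Phi(x)$, which is what converts a coincidence problem into a fixed-point problem in a lossless way. The nontrivial content of this lemma is not the proof but the strategic reformulation: it sets up the application of the Eilenberg--Montgomery coincidence theorem later, which is where the topological hypotheses of Theorem~\ref{thm:MainResult} will actually be needed.
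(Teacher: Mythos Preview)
Your proof is correct and follows essentially the same approach as the paper: the paper's argument, given in the paragraph immediately preceding the lemma, unpacks a coincidence $(x,y)$ with $\pi_1(x,y)=\pi_2(x,y)$ into a fixed point $x\in\Phi(x)$ and then invokes the already-established equivalence between such fixed points and Nash equilibria. You simply spell out both directions more explicitly than the paper does.
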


\subsection{Nikaido--Isoda Function}\label{subsec:NikaidoIsodaFunction}

The global best-reply correspondence $\Phi\colon E\longrightarrow 2^{E}$ is a key object for describing the nature of an abstract economy, as it encapsulates which decisions perfectly rational players would make as reactions to any decisions by their opponents.
Unfortunately, it is quite cumbersome to explicitly compute, especially since it requires knowing the $N$ best-reply correspondences of the individual players, which require understanding all $N$ profit functions.

One very helpful idea is given by Hukukane Nikaido and Kazuo Isoda in their paper ~\cite{NikaidoIsoda1955} from 1955.
Considering the timeline of when game theory was developed, their paper is one of the earliest and contains many influential observations related to non-cooperative games.
The authors associate to a non-cooperative game a particular real-valued function, which we now call Nikaido--Isoda Function, serving as a unification of the $N$ different profit functions.
It turns out that the construction is actually transferrable to abstract economies as well.

A tiny concession has to be made to make room for the Nikaido--Isoda Function.
In the Definition~\ref{def:AbstractEconomyAndRelatedNotions} we require that $\theta_i\colon\operatorname{Graph}(X_i)\longrightarrow\mathbb{R}$ are defined on the graph of the corresponding set-valued map $X_i$.
Now it will be necessary to assume that for all $1\leq i\leq N$ the profit function $\theta_i$ admits a continuous extension to $E\times E_i \supseteq \operatorname{Graph}(X_i)$.
In the vast majority of cases, a straightforward application of the Tietze--Urysohn Extension Theorem~\cite[Thm.\,35.1]{Munkres2000} immediately yields the sought extension.
With this, we define the Nikaido--Isoda Function as follows.

\begin{definition}\label{def:NikaidoIsodaFunction}
	Let $\mathcal{E}=(E_i,X_i,\theta_i\colon 1\leq i\leq N)$ be an abstract economy with continuous profit functions $\theta_i\colon E\times E_i\longrightarrow\mathbb{R}$. The (continuous) function $\Psi\colon E\times E\longrightarrow\mathbb{R}$ given by
	\begin{displaymath}
		\Psi(x,y)=\sum_{i=1}^{N}\big(\theta_i(x,y_i)-\theta_i(x,x_i)\big)
	\end{displaymath}
	is called the \textit{Nikaido--Isoda Function} of $\mathcal{E}$.
\end{definition}

We apply the marginal function construction to the Nikaido--Isoda Function to obtain a marginal function $V$ and the related correspondence $R$:

\begin{alignat*}{3}
	V\colon & E\longrightarrow\mathbb{R}\cup\{\infty\}, & \qquad & x \longmapsto  \sup\{\Psi(x, y) \colon y\in X(x)\},    \\
	R\colon & E\longrightarrow 2^{E},                   & \qquad & x \longmapsto    \{y\in X(x)\colon \Psi(x,y) = V(x)\}.
\end{alignat*}

It may not be apparent from the deliberations above, but in fact, we have just once again obtained the global best-reply correspondence.
That means the set-valued maps $\Phi$ and $R$ coincide.
As far as the author can tell, this was not pointed out in earlier sources.

\begin{proposition}\label{prop:NI_brings_nothing_new}
	Let $\mathcal{E}= (E_i,X_i,\theta_i : 1\leq i\leq N)$ be an abstract economy with $\theta_i\colon E\times E_i\longrightarrow\mathbb{R}$.
	Using the notation above, and denoting the global best-reply correspondence of $\mathcal{E}$ with $\Phi$ we have that $\Phi(x) = R(x)$ for every fixed point $x\in X(x)$.
\end{proposition}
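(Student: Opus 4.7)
The plan is to prove $\Phi(x)=R(x)$ by direct computation, exploiting the fact that $X(x)$ is literally the product $X_1(x)\times\cdots\times X_N(x)$ and that the $i$-th summand of $\Psi(x,y)$ depends only on $y_i$. Under this product structure the maximization defining $V(x)$ decouples into $N$ independent scalar maximizations, one per coordinate, which will let me rewrite everything in terms of the marginal functions $\varphi_i$.

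First I would compute $V(x)$. The additive constants $\theta_i(x,x_i)$ can be pulled out of the supremum (and are finite, since the fixed-point hypothesis $x\in X(x)$ places $x_i\in X_i(x)$), and the supremum of an independently-separable sum over a Cartesian product equals the sum of marginal suprema. This gives
\begin{displaymath}
V(x)=\sum_{i=1}^N\varphi_i(x)-\sum_{i=1}^N\theta_i(x,x_i).
\end{displaymath}
Consequently, a point $y\in X(x)$ belongs to $R(x)$ if and only if $\sum_i\theta_i(x,y_i)=\sum_i\varphi_i(x)$. Each summand on the left is bounded above by the corresponding summand on the right because $y_i\in X_i(x)$, so equality of the two sums forces termwise equality: $\theta_i(x,y_i)=\varphi_i(x)$ for every $i$, which is precisely the condition $y_i\in\Phi_i(x)$ for every $i$, i.e., $y\in\Phi(x)$. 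The reverse inclusion is immediate by running the same chain of equalities backwards.

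The only subtlety, which I expect to be bookkeeping rather than a real obstacle, is handling the case in which some $\varphi_i(x)=+\infty$. Then $\Phi_i(x)$ must be empty, because the real-valued function $\theta_i$ cannot attain $+\infty$, so $\Phi(x)=\emptyset$; likewise $V(x)=+\infty$ while $\Psi(x,\cdot)$ is always finite on $E\times E$, so $R(x)=\emptyset$ as well, and the equality holds vacuously. Once this corner case is dispatched, the computation above finishes the proof.
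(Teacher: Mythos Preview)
Your argument is correct. The decoupling of the supremum over the Cartesian product $X(x)=\prod_i X_i(x)$ is valid because each summand of $\Psi(x,y)$ depends on only one coordinate $y_i$, and the nonemptiness of each $X_i(x)$ rules out any $-\infty$ issues, so $V(x)=\sum_i\varphi_i(x)-\sum_i\theta_i(x,x_i)$ holds in $\mathbb{R}\cup\{+\infty\}$. The ``sum of upper bounds equals sum of values forces termwise equality'' step is exactly right, and your treatment of the corner case $\varphi_i(x)=+\infty$ is clean.

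The paper takes a different route: instead of computing $V(x)$, it argues on complements, showing $X(x)\setminus\Phi(x)=X(x)\setminus R(x)$ by producing, in each direction, an explicit improving point. If $y\notin R(x)$ one picks $y'$ with $\Psi(x,y')>\Psi(x,y)$ and reads off a coordinate where $\theta_j$ strictly increased; if $y\notin\Phi(x)$ one improves a single deficient coordinate $y_j$ and observes that $\Psi$ goes up. Your approach has the advantage of yielding the closed formula for $V(x)$ as a byproduct and of making the role of the fixed-point hypothesis $x\in X(x)$ transparent (it is exactly what guarantees the constants $\theta_i(x,x_i)$ are finite and can be cancelled). The paper's approach, by contrast, never needs to evaluate $V(x)$ and handles the $\varphi_i(x)=+\infty$ case implicitly without a separate discussion. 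Both are short and elementary; yours is perhaps the more structurally illuminating of the two.
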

\begin{proof}
	Instead of proving $\Phi(x)=R(x)$ we prove the equivalent statement that $X(x)\setminus\Phi(x)=X(x)\setminus R(x)$.

	\medskip\noindent
	First, assume that $y\in X(x)\setminus R(x)$.
	Consequently, by definition of $R(x)$ and $V(x)$, there exists $y'\in X(x)$ such that $\Psi(x,y') > \Psi(x,y)$, that is
	\[
		\sum_{i=1}^{N}\bigl(\theta_i(x,y'_i)-\theta_i(x,x_i)\bigr) > \sum_{i=1}^{N}\bigl(\theta_i(x,y_i)-\theta_i(x,x_i)\bigr) \quad \Longrightarrow \quad
		\sum_{i=1}^{N}\theta_i(x,y'_i) > \sum_{i=1}^{N}\theta_i(x,y_i).
	\]
	Hence, there exists at least one $1\leq j\leq N$ such that $\theta_j(x,y'_j) > \theta_j(x,y_j)$ and so $y_j\not\in \Phi_j(x)$ implying that  $y\in X(x)\setminus\Phi(x)$.

	\medskip
	Now assume that $y \in X(x)\setminus\Phi(x)$, or in other words there is an index $1\leq j\leq N$ such that $y_j\not\in \Phi_j(x)$.
	Consequently, there is $y_j'\in X_j(x)$ such that $\theta_j(x,y_j') > \theta_j(x,y_j)$ and so
	\begin{multline*}
		\Psi(x,y) = \Psi((x_1,\dots,x_j,\dots,x_N),(y_1,\dots,y_j,\dots,y_N)) = \sum_{i=1}^{N}\bigl(\theta_i(x,y_i)-\theta_i(x,x_i)\bigr) \\ =
		\sum_{i=1}^{j-1}\bigl(\theta_i(x,y_i)-\theta_i(x,x_i)\bigr) + (\theta_j(x,y_j)-\theta_j(x,x_j)) + \sum_{i=j+1}^{N}\bigl(\theta_i(x,y_i)-\theta_i(x,x_i)\bigr) \\ < \sum_{i=1}^{j-1}\bigl(\theta_i(x,y_i)-\theta_i(x,x_i)\bigr) + (\theta_j(x,y_j')-\theta_j(x,x_j)) + \sum_{i=j+1}^{N}\bigl(\theta_i(x,y_i)-\theta_i(x,x_i)\bigr)   \\ =  \Psi((x_1,\dots,x_j,\dots,x_N),(y_1,\dots,y_j',\dots,y_N)).
	\end{multline*}
	Therefore, $V(x)\neq\Psi(x,y)$ and so $x\in X(x)\setminus R(x)$.
\end{proof}

Combining this result with the fact that in an abstract economy, Nash equilibria are fixed points of the global best-reply correspondence, we arrive at the following corollary.

\begin{corollary}\label{cor:FixedPointNIBestReplyCorrespondence}
	Let $\mathcal{E}=(E_i,X_i,\theta_i\colon 1\leq i\leq N)$ be an abstract economy with $\theta_i\colon E\times E_i\longrightarrow\mathbb{R}$. Then $x\in X(x)$ is a Nash equilibrium of $\mathcal{E}$ if and only if $x\in R(x)$.
\end{corollary}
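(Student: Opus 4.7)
The statement is an essentially immediate synthesis of two facts that have already been established in the paper: the characterization of Nash equilibria as fixed points of the global best-reply correspondence, and Proposition~\ref{prop:NI_brings_nothing_new} which identifies $\Phi$ with $R$ on fixed points of $X$. So the plan is to assemble these two pieces under the hypothesis $x \in X(x)$ built into the statement.

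Concretely, I would first invoke the equivalence noted in Section~2 (just before Lemma~\ref{lem:CoincidenceCriterion1}): a global decision $x$ is a Nash equilibrium of $\mathcal{E}$ if and only if $x \in \Phi(x)$. This uses only the definitions of $\Phi_i$ and the Nash condition and has been argued to follow from the tautology that $\theta_i(x,x_i) \geq \theta_i(x,x_i')$ for all $x_i' \in X_i(x)$ is the same as $\theta_i(x,x_i) = \varphi_i(x)$, together with $x_i \in X_i(x)$.

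Next, since the statement assumes $x \in X(x)$, Proposition~\ref{prop:NI_brings_nothing_new} applies directly and gives the set-equality $\Phi(x) = R(x)$. Chaining the two equivalences yields
\begin{displaymath}
x \text{ is a Nash equilibrium} \iff x \in \Phi(x) \iff x \in R(x),
\end{displaymath}
which is exactly what the corollary claims. The ``only if'' direction additionally uses the trivial observation that $\Phi(x) \subseteq X(x)$, so the assumption $x \in X(x)$ is automatically satisfied whenever $x$ is a Nash equilibrium, keeping the two formulations in lockstep.

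There is no real obstacle here: the corollary is a bookkeeping consequence of Proposition~\ref{prop:NI_brings_nothing_new}. The only subtle point worth mentioning in the write-up is why the hypothesis $x \in X(x)$ is needed in the statement at all, namely that Proposition~\ref{prop:NI_brings_nothing_new} was only proved under that hypothesis (and in the Nash equilibrium direction this hypothesis comes for free from $\Phi(x) \subseteq X(x)$).
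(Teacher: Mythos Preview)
Your proposal is correct and matches the paper's approach exactly: the paper states the corollary immediately after Proposition~\ref{prop:NI_brings_nothing_new} with the remark that it follows by combining that proposition with the already-established equivalence between Nash equilibria and fixed points of $\Phi$. Your additional note about why the hypothesis $x\in X(x)$ is needed (and why it is automatic in the Nash-equilibrium direction) is a helpful clarification that the paper leaves implicit.
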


Notice that by definition of the Nikaido--Isoda Function, we have that
\begin{displaymath}
	V(x) \overset{\text{def}}{\geq}\Psi(x,x) = \sum_{i=1}^{N}\left(\theta_i(x,x_i) - \theta_i(x,x_i)\right) = 0.
\end{displaymath}

Consequently, $V(x)\geq 0$, and if we have that $x\in R(x)$, meaning $\Psi(x,x) = V(x)$, we must have $V(x) = 0$. So for any Nash equilibrium $x$ we have that $V(x) = 0$. Is the converse true? Mostly yes, however, a mild additional assumption is necessary. Recall that for all $1\leq i \leq N$ we allow $X_i\colon E\longrightarrow 2^{E_i}$ to depend on the $i$-th factor of $x\in E$. Intuitively, this is an unnatural assumption, and, unfortunately, we have to give it up in order to gain the converse direction.

\begin{proposition}\label{prop:NikaidoIsodaMaximizationFindsGNEP}
	Let $\mathcal{E}=(E_i,X_i,\theta_i\colon 1\leq i\leq N)$ be an abstract economy with $\theta_i\colon E\times E_i\longrightarrow\mathbb{R}$ and retain the notation used so far. Assume that for all $1\leq i\leq N$, for any $x\in E$ and any $y_i\in E_i$ we have that
	\begin{displaymath}
		X_i(x_1,\dots,x_i,\dots,x_N) = X_i(x_1,\dots,y_i,\dots,x_N).
	\end{displaymath}
	In words: Assume that $X_i(x)$ for $x=(x_1,\dots,x_N)$ does not depend on $x_i$.
	Then for all $x\in X(x)$:
	\begin{displaymath}
		V(x) = 0 \quad \Longleftrightarrow \quad \text{$x$ is a Nash equilibrium of $\mathcal{E}$}.
	\end{displaymath}
\end{proposition}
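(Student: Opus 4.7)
The plan is to prove both directions by testing the supremum defining $V(x)$ against carefully chosen points of $X(x)$. The forward direction only uses $x \in X(x)$ together with the Nash condition applied termwise, while the converse relies on producing, for each candidate deviation $x_i' \in X_i(x)$, a single-coordinate perturbation of $x$ that still lies in $X(x)$.

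For the direction ``Nash equilibrium implies $V(x)=0$'', I first use the algebraic identity $\Psi(x,x)=0$, which together with $x\in X(x)$ gives $V(x)\geq 0$. For the reverse inequality, I pick any $y\in X(x)$; then $y_j\in X_j(x)$ for every $j$, so the second Nash condition applied with $x_j':=y_j$ yields $\theta_j(x,y_j)\leq\theta_j(x,x_j)$ for each $j$. Summing over $j$ gives $\Psi(x,y)\leq 0$, and taking the supremum over $y\in X(x)$ forces $V(x)\leq 0$.

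For the converse, I assume $V(x)=0$ with $x\in X(x)$. The first Nash condition is immediate from $x\in X(x)$. To establish the second, I fix $i$ and an admissible deviation $x_i'\in X_i(x)$, and form the single-coordinate perturbation $y:=(x_1,\dots,x_{i-1},x_i',x_{i+1},\dots,x_N)$. The key step is to verify $y\in X(x)$: for $j\neq i$ we have $y_j=x_j\in X_j(x)$ since $x$ is a fixed point of $X$, and for $j=i$ we have $y_i=x_i'\in X_i(x)$ by hypothesis. Applying $\Psi(x,y)\leq V(x)=0$, the sum collapses because $y_j=x_j$ for $j\neq i$, leaving
\[
	\theta_i(x,x_i')-\theta_i(x,x_i)=\Psi(x,y)\leq 0,
\]
which is precisely the desired Nash inequality for this $i$ and this deviation.

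The role of the extra hypothesis that $X_i$ is independent of its $i$-th argument is largely interpretive: it ensures $X_i(y)=X_i(x)$, so that $x_i'$ remains admissible for player $i$ in the perturbed global decision $y$, making the conceptual reading of $y$ as a genuine unilateral deviation clean. I do not anticipate a serious obstacle; the whole proof is short, and the only real work is constructing the right test point $y$ and tracking that the only surviving summand in $\Psi(x,y)$ is the $i$-th one. As a sanity check, the equivalence also follows in one line by chaining Proposition~\ref{prop:NI_brings_nothing_new} and Corollary~\ref{cor:FixedPointNIBestReplyCorrespondence}: for $x\in X(x)$, $x$ is a Nash equilibrium iff $x\in\Phi(x)=R(x)$ iff $\Psi(x,x)=V(x)$ iff $V(x)=0$.
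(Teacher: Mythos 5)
Your proof is correct and follows essentially the same route as the paper's: for the nontrivial direction the paper constructs exactly the same single-coordinate test point $x^*=(x_1,\dots,x_{i-1},y_i,x_{i+1},\dots,x_N)$, checks $x^*\in X(x)$, and reads off $\theta_i(x,y_i)-\theta_i(x,x_i)=\Psi(x,x^*)\leq V(x)=0$, while the easy direction is handled by the $\Psi(x,x)=0$ computation preceding the proposition. Your side remark is also apt: since $X(x)=X_1(x)\times\cdots\times X_N(x)$, the membership $y\in X(x)$ already follows from $x\in X(x)$ and $x_i'\in X_i(x)$, so the independence hypothesis is not actually load-bearing in this verification (the paper invokes it at precisely this step, but the check goes through without it).
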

\begin{proof}
	It remains to show that $V(x)=0$ implies that $x$ is a Nash equilibrium of $\mathcal{E}$. Select any player $1\leq i \leq N$ and any $y_i \in X_i(x)$. Denote $x^* = (x_1,\dots,x_{i-1},y_i,x_{i+1},\dots,x_N)$. By our assumption that $X_i(x)$ does not depend on $x_i$, we conclude for $x^*$ that also $x^*\in X(x)$. With this we may apply the definition of $V(x)$ to obtain that
	\begin{multline*}
		0\geq \Psi(x,x^*)=\\
		\sum_{j=1}^{i-1}\big(\theta_j(x,x_j) - \theta_j(x,x_j)\big)
		+\big(\theta_i(x,y_i) - \theta_i(x,x_i)\big)+\sum_{j=i+1}^{N}\big(\theta_j(x,x_j) - \theta_j(x,x_j)\big) =\\ \theta_i(x,y_i) - \theta_i(x,x_i).
	\end{multline*}
	Thus, $\theta_i(x,y_i)\leq \theta_i(x,x_i)$. Since $y_i\in X_i(x)$ and $1\leq i \leq N$ was arbitrary, we have verified that $x$ is a Nash equilibrium of $\mathcal{E}$
\end{proof}

This result is more useful if our interest is to numerically approximate $\Psi$ and consequently find the minima $V$. We know that the minima of $V$ are at $0$, and that these minima are exactly the Nash equilibria. In particular examples we may apply numerical tools to approximate these minima which are simultaneously zeros and compute the equilibria.

\medskip
For us there is yet another beneficial use of the Nikaido--Isoda function, which arises when working with a particular specialization of an abstract economy. We cover this specialization in the next section.

\subsection{Coincidence Criterion for Abstract Economies in Admissible Form}

The condition for $x$ to be a Nash equilibrium of an abstract economy $\mathcal{E}=(E_i,X_i,\theta_i\colon 1\leq i\leq N)$ requires that $x$ must in particular fulfill $x\in X(x)$.
So the set $C:= \{x\in E\colon x\in X(x)\}$ of fixed points of $X$ is an essential object for the analysis of an abstract economy.

Recall that the set-valued maps $X_1,\dots,X_N$ model the fact that there may be a restriction on the global decision chosen by the players.
Thereby we take into account the possibility that the restrictions may depend on the choices of the players.
This offers an incredibly broad framework for modelling many real processes as abstract economies, but, on the other hand, the full generality of this setup is somewhat uncommon.
More frequently, the restrictions on the global decision are given, for example, via physical constraints, which are fixed with respect to the choices of the players. This is made more precise by the following specialization of an abstract economy.

\begin{definition}\label{def:AbstractEconomyAdmissibleForm}
	An $N$-player abstract economy $\mathcal{E}=(E_i,X_i,\theta_i\colon 1\leq i\leq N)$ is said to be in \textit{admissible form} if there exists $C\subseteq E$ such that
	\begin{displaymath}
		X_i(x) = \{y_i\in E_i\colon (x_1,\dots,x_{i-1}, y_i, x_{i+1}, \dots, x_N)\in C\}.
	\end{displaymath}
	We call $C$ the set of \textit{admissible global strategies}.
\end{definition}
In the literature, abstract economies in admissible form are often referred to as \textit{jointly constrained} generalized Nash equilibrium problems, see for example~\cite{Stengl2021}.

Note that in this setup, our notation is chosen to be consistent, because it is still the case that $C = \{x\in E\colon x\in X(x)\}$ is the fixed-point set of $X$. However, there may be many different families of set-valued maps $X_i\colon E\longrightarrow 2^{E_i}$ which yield the same fixed-point set $\{x\in E\colon x\in X(x)\}$. In the case of an abstract economy in admissible form, the family of set-valued maps $X_1,\dots,X_N$ is specified explicitly. It is worth noting that the definition of $X_i(x)$ in the case of an abstract economy is such that $X_i(x)$ does not depend on $x_i$, so we may apply Proposition~\ref{prop:NikaidoIsodaMaximizationFindsGNEP} to abstract economies in admissible form whenever necessary.

In this situation, we develop a condition for the existence of a Nash equilibrium which is even more convenient to use. To understand why it is useful, notice that in Section~\ref{subsec:NikaidoIsodaFunction} we used the marginal function construction with the Nikaido--Isoda Function to arrive at the definition $V(x) := \sup\{\Psi(x,y)\colon y\in X(x)\}$. Thus, $V(x)$ is the supremum of a function $\Psi(x,\cdot)$, which depends on $x$, over a set $X(x)$, which also depends on $x$. The following lemma holds for abstract economies in admissible form, removing the dependency on $x$ from the set over which the supremum has to be taken, albeit at a price.

\begin{lemma}\label{lem:ConditionAbstractEconomyAdmissibleForm}
	Let $\mathcal{E}=(E_i,X_i,\theta_i\colon 1\leq i\leq N)$ be an abstract economy in admissible form given by the set $C\subseteq E$. Define
	\begin{alignat*}{3}
		\widetilde{V}\colon & C\longrightarrow\mathbb{R}\cup\{\infty\}, & \qquad & x \longmapsto  \sup\{\Psi(x, y) \colon y\in C\},                \\
		\widetilde{R}\colon & C\longrightarrow 2^{C},                   & \qquad & x \longmapsto    \{y\in C\colon \Psi(x,y) = \widetilde{V}(x)\}.
	\end{alignat*}
	Then for all $x\in C$ we have that:
	\begin{compactenum}[\rm \quad (1)]
		\item $\widetilde{V}(x) = 0$ implies that $x$ is a Nash equilibrium of $\mathcal{E}$
		\item $x \in \widetilde{R}(x)$ implies that $x$ is a Nash equilibrium of $\mathcal{E}$
	\end{compactenum}
\end{lemma}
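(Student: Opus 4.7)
The plan is to prove (1) directly via a one-coordinate telescoping computation on the Nikaido--Isoda function, and then derive (2) as an immediate consequence of (1).

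For part (1), I would fix $x\in C$ with $\widetilde{V}(x)=0$ and establish the Nash optimality inequality coordinate by coordinate. Given any player index $1\leq i\leq N$ and any admissible deviation $y_i\in X_i(x)$, the admissible-form definition of $X_i$ tells me precisely that the one-coordinate perturbation $y^{(i)}:=(x_1,\ldots,x_{i-1},y_i,x_{i+1},\ldots,x_N)$ lies in $C$. Therefore $y^{(i)}$ is a legitimate test point for the supremum defining $\widetilde{V}(x)$, so $\Psi(x,y^{(i)})\leq \widetilde{V}(x)=0$. On the other hand, in the sum $\Psi(x,y^{(i)})=\sum_{j=1}^N\bigl(\theta_j(x,y^{(i)}_j)-\theta_j(x,x_j)\bigr)$, every summand with $j\neq i$ vanishes because $y^{(i)}_j=x_j$, leaving $\Psi(x,y^{(i)})=\theta_i(x,y_i)-\theta_i(x,x_i)$. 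Combining these two facts yields $\theta_i(x,y_i)\leq \theta_i(x,x_i)$, which is exactly the optimality half of the Nash equilibrium condition for player $i$. The feasibility half, $x_i\in X_i(x)$, is automatic since $x\in C$ means $(x_1,\ldots,x_{i-1},x_i,x_{i+1},\ldots,x_N)=x\in C$, so by the admissible-form definition $x_i\in X_i(x)$.

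For part (2), the identity $\Psi(x,x)=\sum_{i=1}^N\bigl(\theta_i(x,x_i)-\theta_i(x,x_i)\bigr)=0$ holds tautologically. Hence $x\in\widetilde{R}(x)$ means $\widetilde{V}(x)=\Psi(x,x)=0$, and we are reduced to part (1).

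The only step that requires any care, and which is the ``price'' foreshadowed just before the lemma, is verifying that the perturbation $y^{(i)}$ does lie in $C$, so that it is admitted as a test point for $\widetilde{V}(x)$. This uses the admissible-form hypothesis essentially: for an arbitrary $y\in X(x)$ there is no reason to expect $y\in C$, but single-coordinate perturbations built from admissible deviations always do land in $C$ by the very definition of $X_i$ in admissible form. Beyond this bookkeeping, there is no topological or analytical ingredient in the argument, so the proof is very short.
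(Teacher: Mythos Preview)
Your proof is correct and follows essentially the same route as the paper: both arguments hinge on the observation that, in admissible form, the single-coordinate perturbation $y^{(i)}=(x_1,\ldots,y_i,\ldots,x_N)$ lies in $C$, so that $\Psi(x,y^{(i)})\leq\widetilde V(x)=0$ collapses to $\theta_i(x,y_i)\leq\theta_i(x,x_i)$; part~(2) is then reduced to part~(1) via $\Psi(x,x)=0$ in both proofs. The only cosmetic difference is that the paper first parlays these coordinatewise inequalities into $V(x)=0$ and then invokes Proposition~\ref{prop:NikaidoIsodaMaximizationFindsGNEP}, whereas you recognize that the inequalities already \emph{are} the Nash optimality condition and stop there---a slightly more direct finish.
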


\noindent
In general the converse directions of the implications in these statements do not hold.

\begin{proof}
	For the first claim, Proposition~\ref{prop:NikaidoIsodaMaximizationFindsGNEP} says that it is sufficient to prove
	\begin{displaymath}
		\widetilde{V}(x)=0 \ \Longrightarrow \ V(x)=0.
	\end{displaymath}
	For that assume $y=(y_1,\dots,y_N)\in X(x)$ is arbitrary or equivalently, that $y_i\in X_i(x)$ for all $1\leq i \leq N$. By definition of an abstract economy in admissible form, we now know that for all players $1\leq i\leq N$
	\begin{displaymath}
		y^*:=(x_1,\dots,x_{i-1},y_i,x_{i+1},\dots,x_N)\in C.
	\end{displaymath}
	Using the definition of $\tilde{V}(x) = \sup\{\Psi(x, y) \colon y\in C\}$ and the assumption $\tilde{V}(x) = 0$, we obtain that $\Psi(x,y^*) \leq 0$, which, after unraveling the definition of $\Psi$, implies $\theta_i(x,y_i)-\theta_i(x,x_i) \leq 0$.
	Since $i$ was arbitrary, we can sum over all $1\leq i \leq N$ to obtain
	\begin{displaymath}
		\Psi(x,y)=\sum_{i=1}^{N}\big(\theta_i(x,y_i)-\theta_i(x,x_i)\big)\leq 0.
	\end{displaymath}
	Then, because $y\in X(x)$ was also arbitrary, we may conclude that $V(x)\leq 0$. But we already know that $V(x)\geq 0$ is always non-negative, so ultimately we obtain $V(x)=0$, implying that $x$ is a Nash equilibrium of $\mathcal{E}$.

	The second claim follows immediately because $x\in\tilde{R}(x)$ by definition implies that $0=\Psi(x,x)=\tilde{V}(x)$, and we can apply the first part.
\end{proof}

We may now repeat the same reasoning as for our previous coincidence criterion of Lemma~\ref{lem:CoincidenceCriterion1} to arrive at the following lemma for the existence of a Nash equilibrium in an abstract economy in admissible form.

\begin{lemma}[Coincidence criterion for admissible form]\label{lem:CoincidenceCriterionAdmissibleForm}
	Let $N\geq 1$ be an integer, $\mathcal{E}=(E_i,X_i,\theta_i : 1\leq i\leq N)$ an abstract economy in admissible form with $C\subseteq E$ being the set of admissible global strategies. Retaining the notation from Lemma~\ref{lem:ConditionAbstractEconomyAdmissibleForm}, set
	\begin{displaymath}
		\pi_1 \colon \operatorname{Graph}(\widetilde{R})\longrightarrow C \quad \text{and}\quad \pi_2 \colon \operatorname{Graph}(\widetilde{R})\longrightarrow C,
	\end{displaymath}
	to be the restrictions of the canonical projections $C\times C\longrightarrow C$. If $\pi_1$ and $\pi_2$ have a coincidence on $\operatorname{Graph}(\tilde{R})$, then $\mathcal{E}$ has a Nash equilibrium.
\end{lemma}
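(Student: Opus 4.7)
The plan is to mimic the short argument used for Lemma~\ref{lem:CoincidenceCriterion1}, but this time invoking Lemma~\ref{lem:ConditionAbstractEconomyAdmissibleForm}(2) to convert the fixed point of $\widetilde{R}$ into a Nash equilibrium. Since the statement is only a one-directional implication, there is no need to reverse any step.

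Concretely, I would first unpack what a coincidence of $\pi_1$ and $\pi_2$ on $\operatorname{Graph}(\widetilde{R})$ means. By hypothesis there exists $(x,y)\in\operatorname{Graph}(\widetilde{R})\subseteq C\times C$ with $\pi_1(x,y)=\pi_2(x,y)$. Unwinding the definition of the canonical projections restricted to $\operatorname{Graph}(\widetilde{R})$ yields $x=y$, so in particular $(x,x)\in\operatorname{Graph}(\widetilde{R})$, which is by definition equivalent to the fixed-point condition $x\in\widetilde{R}(x)$.

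At this stage the heavy lifting has already been done: I would just invoke Lemma~\ref{lem:ConditionAbstractEconomyAdmissibleForm}(2), which tells us that $x\in\widetilde{R}(x)$ implies that $x$ is a Nash equilibrium of $\mathcal{E}$. This produces the desired equilibrium and completes the argument.

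There is essentially no obstacle to overcome here, because the nontrivial content has been absorbed into Lemma~\ref{lem:ConditionAbstractEconomyAdmissibleForm} (and, upstream, into Proposition~\ref{prop:NikaidoIsodaMaximizationFindsGNEP}, whose proof requires the admissible-form hypothesis so that modifying a single coordinate of $x$ by any $y_i\in X_i(x)$ keeps the resulting tuple inside $C$). The only point worth flagging in the writeup is that, unlike Lemma~\ref{lem:CoincidenceCriterion1}, this criterion is not an equivalence: we do not have an analogue of Proposition~\ref{prop:NI_brings_nothing_new} identifying $\Phi$ with $\widetilde{R}$, so a Nash equilibrium need not give rise to a coincidence of $\pi_1$ and $\pi_2$ on $\operatorname{Graph}(\widetilde{R})$. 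The sufficiency direction, which is what we need for proving existence of equilibria, is all the lemma claims and all the proof delivers.
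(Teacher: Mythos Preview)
Your proposal is correct and follows exactly the approach the paper indicates: the paper simply says to ``repeat the same reasoning as for our previous coincidence criterion of Lemma~\ref{lem:CoincidenceCriterion1}'', which amounts to extracting the fixed point $x\in\widetilde{R}(x)$ from the coincidence and then invoking Lemma~\ref{lem:ConditionAbstractEconomyAdmissibleForm}(2). Your remark that the criterion is only one-directional is also in line with the paper's observation following Lemma~\ref{lem:ConditionAbstractEconomyAdmissibleForm}.
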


\section{Proof of the main result}

\subsection{Prerequisites}

We begin with some results which will be necessary for the proof of our main Nash equilibrium existence result, Theorem~\ref{thm:MainResult}. First is a famous theorem from set-valued analysis due to Claude Berge~\cite[Ch.VI.3,\,Thm.\,1]{Berge1997} from 1959, which gives us some valuable properties of the functions which arise from the marginal function construction we've been using so far. We state it in a way similar to Ichiishi~\cite[Thm.\,2.3.1]{Ichiishi_1983}.

\begin{theorem}[Berge's Maximum Theorem]\label{thm:MaximumTheorem}
	Let $X$ and $Y$ be metric spaces, the function  $f\colon X\times Y\longrightarrow \mathbb{R}$ continuous, and let $F\colon X\longrightarrow 2^Y$ be a set-valued map that is strict, compact-valued, and both lower and upper semicontinuous.
	Then
	\begin{compactenum}[\rm \quad (1)]
		\item the marginal function $\varphi\colon X\longrightarrow \mathbb{R}$ defined by $\varphi(x):=\max\{f(x,y)\colon y\in F(x)\}$ is continuous, and
		\item the marginal set-valued map $\Phi\colon X\longrightarrow 2^Y$ given by $\Phi(x):= \{y\in F(x)\colon f(x,y)=\varphi(x)\}$ is upper semicontinuous and strict.
	\end{compactenum}
\end{theorem}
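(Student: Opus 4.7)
The overall strategy is the standard sequential approach, relying on the fact that both $X$ and $Y$ are metric: for each claim, translate it into a statement about sequences and exploit the interplay between upper/lower semicontinuity of $F$ and the continuity of $f$. I first observe that $\varphi$ is well-defined and $\Phi$ is strict: since $F(x)\neq\emptyset$ is compact and $f(x,\cdot)\colon F(x)\to\R$ is continuous, the Weierstrass extreme value theorem guarantees that the supremum defining $\varphi(x)$ is in fact a maximum attained at some $y\in F(x)$, so $\varphi(x)\in\R$ and $y\in\Phi(x)$.

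For the continuity of $\varphi$ at a point $x_0\in X$, I would prove upper and lower semicontinuity separately. For upper semicontinuity, pick a sequence $x_n\to x_0$ and, using strictness of $\Phi$, choose $y_n\in\Phi(x_n)\subseteq F(x_n)$ so that $\varphi(x_n)=f(x_n,y_n)$. Upper semicontinuity of $F$ together with compactness of its values yields a subsequence $y_{n_k}\to y_0\in F(x_0)$. By continuity of $f$,
\begin{displaymath}
\limsup_{n}\varphi(x_n) \;=\; \lim_{k}\varphi(x_{n_k}) \;=\; f(x_0,y_0)\;\leq\;\varphi(x_0).
\end{displaymath}
For lower semicontinuity, fix $y_0\in\Phi(x_0)$; lower semicontinuity of $F$ produces $y_n\in F(x_n)$ with $y_n\to y_0$, so
\begin{displaymath}
\liminf_{n}\varphi(x_n)\;\geq\;\liminf_{n}f(x_n,y_n)\;=\;f(x_0,y_0)\;=\;\varphi(x_0).
\end{displaymath}

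For the upper semicontinuity of $\Phi$, I would argue by contradiction. Suppose $U\subseteq Y$ is open with $\Phi(x_0)\subseteq U$ but every neighborhood of $x_0$ contains some $x$ with $\Phi(x)\not\subseteq U$. Then there exist $x_n\to x_0$ and $y_n\in\Phi(x_n)\setminus U$. Since $y_n\in F(x_n)$ and $F$ is upper semicontinuous and compact-valued, a subsequence $y_{n_k}$ converges to some $y_0\in F(x_0)$. Combining the continuity of $f$ with the continuity of $\varphi$ established in part (1),
\begin{displaymath}
f(x_0,y_0)\;=\;\lim_{k}f(x_{n_k},y_{n_k})\;=\;\lim_{k}\varphi(x_{n_k})\;=\;\varphi(x_0),
\end{displaymath}
so $y_0\in\Phi(x_0)\subseteq U$. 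But $U$ is open and $y_{n_k}\to y_0$, forcing $y_{n_k}\in U$ for large $k$, which contradicts $y_{n_k}\notin U$. Strictness of $\Phi$ was already noted above.

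The single technical point I expect to require the most care, rather than a serious obstacle, is the sequential characterization of upper semicontinuity used repeatedly: namely that if $F$ is upper semicontinuous at $x_0$ with $F(x_0)$ compact and $x_n\to x_0$, then any selection $y_n\in F(x_n)$ has an accumulation point in $F(x_0)$. This is obtained by applying the open-set definition of upper semicontinuity to the shrinking $\tfrac{1}{k}$-neighborhoods of $F(x_0)$ to conclude $d(y_n,F(x_0))\to 0$, and then using compactness of $F(x_0)$ to extract a convergent subsequence. Once this tool is recorded, both (1) and (2) follow essentially by bookkeeping.
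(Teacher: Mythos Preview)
The paper does not prove Berge's Maximum Theorem at all; it is stated as a prerequisite and attributed to Berge~\cite[Ch.\,VI.3,\,Thm.\,1]{Berge1997} and Ichiishi~\cite[Thm.\,2.3.1]{Ichiishi_1983}. So there is nothing to compare your argument against on the paper's side.

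Your proof is the standard sequential proof and is essentially correct. One small point of bookkeeping: in the upper-semicontinuity half of part~(1) you write $\limsup_n\varphi(x_n)=\lim_k\varphi(x_{n_k})$, but the subsequence $(n_k)$ was chosen to make $(y_{n_k})$ converge, not to realize the $\limsup$. The fix is routine: first pass to a subsequence along which $\varphi(x_n)$ attains its $\limsup$, and then extract a further subsequence along which the $y$'s converge; the displayed chain of equalities then holds for this doubly refined subsequence. Everything else, including the sequential characterizations of upper and lower semicontinuity you invoke and the contradiction argument for part~(2), is sound.
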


\medskip
The second result we use is due to Samuel Eilenberg and Deane Montgomery \cite[Thm.\,3]{EilenbergDeane1946} in 1946.
In order to state it we need to introduce some related topological notions.

Let $X$ and $Y$ be {\em compact} metric spaces, $\Bbbk$ a field, and let $f\colon X\longrightarrow Y$ be a continuous function.
Fix a homology theory $H_*$ with coefficients in the field $\Bbbk$.
Then, the function $f$ has the \textit{Vietoris property}, with respect to the homology theory $H_*$, if for every $y\in Y$ the pre-image $f^{-1}(\{y\})$ is $\Bbbk$-acyclic, that is $H_0(f^{-1}(\{y\}))\cong\Bbbk$ and $H_i(f^{-1}(\{y\}))\cong 0$ for $i\geq 1$.
Notice that if $f$ has the Vietoris property then $f$ has to be surjective, as the empty set is not acyclic. As the next theorem shows, the Vietoris property, along with some other mild assumptions, is sufficient to guarantee the existence of a coincidence of two functions.

\begin{theorem}\label{thm:EilenbergCoincidenceCondition}
	Let $\Bbbk$ be a fixed field, $Y$ a compact, $\Bbbk$-acyclic {\rm ANR}, $X$ a compact metric space, and let $f\colon X\longrightarrow Y$ and $g\colon X\longrightarrow Y$ be continuous functions.
	Let $H_*$ be the singular homology theory with coefficients in the field $\Bbbk$.
	If (at least) one of the functions $f$ or $g$ has the Vietoris property with respect to the homology theory $H_*$, then $f$ and $g$ have a coincidence.
\end{theorem}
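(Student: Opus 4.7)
The plan is to convert the coincidence problem into a fixed-point problem for a set-valued map and then apply a Lefschetz-type argument driven by the $\Bbbk$-acyclicity of $Y$. Without loss of generality I assume that $f$ has the Vietoris property.

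The first ingredient will be the Vietoris--Begle mapping theorem: since $f$ is a continuous surjection between compact metric spaces with $\Bbbk$-acyclic point preimages, the induced homomorphism $f_{*}\colon H_{*}(X;\Bbbk)\to H_{*}(Y;\Bbbk)$ is an isomorphism. In particular $X$ becomes $\Bbbk$-acyclic and one obtains a well-defined ``homological inverse'' $f_{*}^{-1}\colon H_{*}(Y;\Bbbk)\to H_{*}(X;\Bbbk)$. The composition $g_{*}\circ f_{*}^{-1}\colon H_{*}(Y;\Bbbk)\to H_{*}(Y;\Bbbk)$ then has Lefschetz number equal to $1$, since $Y$ is $\Bbbk$-acyclic and the only nontrivial contribution comes from $H_{0}(Y;\Bbbk)=\Bbbk$.

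Next I would form the set-valued composition $\phi:=g\circ f^{-1}\colon Y\to 2^{Y}$, that is, $\phi(y)=g(f^{-1}(y))$. Continuity of $f$ together with the compactness of $X$ shows that $\phi$ is upper semicontinuous and compact-valued, and the Vietoris property ensures that each fiber $f^{-1}(y)$ is $\Bbbk$-acyclic. A fixed point $y\in\phi(y)$ of $\phi$ produces a point $x\in X$ with $f(x)=g(x)=y$, which is exactly a coincidence of $f$ and $g$. Thus it suffices to find such a fixed point.

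The hard part is promoting the non-vanishing of the Lefschetz number to an actual fixed point of $\phi$; this is the heart of Eilenberg and Montgomery's original argument. The plan is to exploit the compact ANR structure of $Y$ by embedding $Y$ as a neighborhood retract in a normed linear space and approximating the span $Y\xleftarrow{f}X\xrightarrow{g}Y$ by simplicial spans on progressively finer triangulations of a neighborhood of $Y$. The classical Lefschetz fixed-point theorem applies in each simplicial model — where the Lefschetz number is preserved because the approximations are homotopic to $f_{*}\circ\phantom{}$ and $g_{*}\circ\phantom{}$ — and yields approximate coincidences. Passing to the limit using compactness of $X$, upper semicontinuity of $\phi$, and closedness of the set $\{x\in X:f(x)=g(x)\}$ in $X$ then extracts a genuine coincidence point for the original $f$ and $g$. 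The delicate bookkeeping in this approximation step is the main technical obstacle, and is precisely what requires $Y$ to be an ANR rather than a bare compact metric space.
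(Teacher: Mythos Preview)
The paper does not prove this theorem. It is quoted as a known result of Eilenberg and Montgomery \cite[Thm.\,3]{EilenbergDeane1946}, with the remark that one may replace Vietoris homology by singular homology with field coefficients via \cite[Prop.\,2.2]{Tesfatsion1983}. So there is no ``paper's own proof'' to compare against; the authors use Theorem~\ref{thm:EilenbergCoincidenceCondition} as a black box in the proof of Theorem~\ref{thm:MainResult}.

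Your sketch is a faithful outline of the Eilenberg--Montgomery strategy: reduce the coincidence problem to a fixed point of the acyclic-valued upper semicontinuous map $\phi=g\circ f^{-1}$, compute the Lefschetz number via the Vietoris--Begle isomorphism and the $\Bbbk$-acyclicity of $Y$, and then invoke the ANR structure to run an approximation argument. That said, what you have written is explicitly a plan rather than a proof: you identify the approximation step as ``the main technical obstacle'' and do not carry it out. If you intend to actually supply a proof, you would need to either reproduce the simplicial approximation and limiting argument in detail, or cite one of the later packaged versions (e.g.\ Begle, G\'orniewicz--Granas, or Granas--Dugundji) that directly give a Lefschetz fixed-point theorem for acyclic-valued upper semicontinuous maps on compact ANRs. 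As it stands, your proposal is a correct roadmap but not a self-contained proof---which, to be fair, is more than the paper itself provides for this statement.
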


\medskip
In the original work of Eilenberg and Montgomery the homology theory considered was the Vietoris homology theory \cite{Vietoris1927}.
On the other hand, as demonstrated in  \cite[Prop.\,2.2]{Tesfatsion1983}, we can use the singular homology theory in the place of Vietoris homology theory when coefficients are assumed to be in a field.

The result of Eilenberg and Montgomery was generalized by Edward Begle \cite[Thm.\,2]{Begle1950},  Michael Powers \cite[Thm.\,6.3]{Powers1970}, \cite[Thm.\,5.6]{Powers1972}, Lech G\'{o}rniewicz  and Andrzej Granas \cite{GorniewiczGranas}, and Granas  and James Dugundji \cite[Cor.\,19.7.5]{GranasDugundji2003}.
These results can be used to extend Theorem \ref{thm:MainResult} even further.

\medskip
Finally, we need another lemma from set-valued analysis which relates the closedness of a graph to the properties of the set-valued map. In the literature, the following lemma is often called \textit{Closed Graph Lemma}. A proof can be found in most set-valued analysis textbooks, for example in the classical book of Jean-Pierre Aubin and H\'el\`ene Frankowska~\cite[Prop. 1.4.8]{Aubin_Frankowska_2009}.

\begin{lemma}\label{lem:ClosedGraphLemma}
	Let $F\colon X\longrightarrow 2^Y$ be a set-valued map.
	If $F$ is closed-valued, upper semicontinuous and has a closed domain in $X$, then the graph $\operatorname{Graph}(F)$ is closed in $X\times Y$.
	Additionally, if $Y$ is compact, then this statement is an equivalence.
\end{lemma}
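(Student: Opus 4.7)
The plan is to establish the two directions separately, leveraging that $X$, $Y$, and $X\times Y$ are metric so that sequential characterizations of closedness and separation of closed sets from points are available.

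\textbf{Forward direction.} Assuming $F$ is closed-valued, upper semicontinuous with closed domain $D$, I would take any sequence $(x_n,y_n)\in\operatorname{Graph}(F)$ with $(x_n,y_n)\to(x,y)$ in $X\times Y$ and show $(x,y)\in\operatorname{Graph}(F)$. Closedness of $D$ forces $x\in D$. If $y\notin F(x)$, then since $F(x)$ is closed some $\epsilon$-ball around $y$ misses $F(x)$, and $V:=Y\setminus\overline{B(y,\epsilon/2)}$ is an open set containing $F(x)$. Upper semicontinuity supplies an open neighborhood $U$ of $x$ with $F(x')\subseteq V$ for every $x'\in U\cap D$; for large $n$ we have $x_n\in U$, so $y_n\in F(x_n)\subseteq V$, meaning $d(y_n,y)\geq\epsilon/2$, contradicting $y_n\to y$. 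Hence $y\in F(x)$.

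\textbf{Converse direction.} Now assume $Y$ is compact and $\operatorname{Graph}(F)$ is closed. The key fact is the tube lemma: since $Y$ is compact, the projection $\pi_X\colon X\times Y\longrightarrow X$ is a closed map. From this the three desired conclusions follow quickly. First, the domain $\operatorname{dom}(F)=\pi_X(\operatorname{Graph}(F))$ is closed as the image of a closed set under a closed map. Second, each fiber $F(x)$ equals the slice $\operatorname{Graph}(F)\cap(\{x\}\times Y)$, which is closed in $\{x\}\times Y$, so $F$ is closed-valued. Third, for upper semicontinuity at $x\in\operatorname{dom}(F)$, given any open $V\supseteq F(x)$, form the closed set $A:=\operatorname{Graph}(F)\cap(X\times(Y\setminus V))$; then $\pi_X(A)$ is closed in $X$ and does not contain $x$ (since $F(x)\subseteq V$), so $U:=X\setminus\pi_X(A)$ is an open neighborhood of $x$ satisfying $F(U)\subseteq V$.

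\textbf{Main obstacle.} Neither direction is technically difficult; the real subtlety lies in choosing the correct separating open set in each. The forward direction genuinely uses the metric structure on $Y$ to shrink a ball around $y$ so that its complement becomes an open neighborhood of the closed set $F(x)$, which then plugs into the definition of upper semicontinuity. The converse direction crucially exploits compactness of $Y$: without it, the projection $\pi_X$ need not be a closed map, and a closed graph need not imply upper semicontinuity (the standard counterexample $F(x)=\{1/x\}$ on $\R\setminus\{0\}$ extended by $F(0)=\{0\}$, with $Y=\R$, fails upper semicontinuity at the origin despite having a closed graph). This asymmetry is precisely why the ``equivalence'' half of the statement carries the extra compactness hypothesis.
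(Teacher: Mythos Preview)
The paper does not supply its own proof of this lemma; it simply cites Aubin--Frankowska \cite[Prop.\,1.4.8]{Aubin_Frankowska_2009}. Your argument is correct and is essentially the standard textbook proof one finds there: sequential closure via upper semicontinuity in the forward direction, and the closed-projection (tube lemma) argument for the converse under compactness of $Y$. There is nothing to compare against, and no gap to flag.
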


Now we are ready to give the proof of the main result.

\subsection{Proof of Theorem~\ref{thm:MainResult}}\label{subsec:ProofOfMainResult}

We may write $X_i = p_i\circ X$, where $p_i\colon E\longrightarrow 2^{E_i}$ given by $x\longmapsto\{x_i\} $ is induced by the (continuous) canonical projection $(x_1,\dots,x_n)\longmapsto x_i$ on the $i$-th factor. Since $p_i$ is given as a singleton-valued set-valued map induced by a continuous map, it itself is both upper and lower semicontinuous. With this, we can next use the fact that the composition of upper or lower semicontinuous set-valued maps is still upper or lower semicontinuous respectively --- for proofs see e.g.~\cite[Def. 1.3.2 ff.]{Aubin_Frankowska_2009}. Consequently, for all $1\leq i \leq N$ the set-valued map $X_i$ is both upper and lower semicontinuous. This observation also immediately implies that $X_1,\dots,X_n$ are also compact-valued and strict.

From $E$ being compact, we conclude that for all $1\leq i \leq N$ the spaces $E_i$ are also compact, as continuous images of a compact set. With the upper semicontinuity of $X_i$, the conditions for the Closed Graph Lemma~\ref{lem:ClosedGraphLemma} are fulfilled, so we may conclude that $\operatorname{Graph}(X_i)$ is closed in $E\times E_i$. This allows us to apply the Tietze--Urysohn extension theorem~\cite[Thm.\,35.1]{Munkres2000} to obtain continuous extensions $\theta_i'\colon E\times E_i\longrightarrow\mathbb{R}$.

Now, with $X_i$ both upper and lower semicontinuous, $\theta_i'$ continuous on $E\times E_i$, we may apply Berge's Maximum Theorem~\ref{thm:MaximumTheorem} to the metric spaces $E$ and $E_i$ to conclude that $\Phi_i\colon E\longrightarrow 2^{E_i}$ is upper semicontinuous for all $1\leq i \leq N$. Note that it does not matter whether we're considering $\theta_i$ or the extensions $\theta_i'$, since they agree on all points which we are maximizing over.

The Closed Graph Lemma~\ref{lem:ClosedGraphLemma} now yields that $\operatorname{Graph}(\Phi)$ is closed, so also compact, as a subset of a compact space $E\times E$.

\medskip According to our first coincidence criterion~\ref{lem:CoincidenceCriterion1}, it suffices to show that the restrictions of the canonical projections
\begin{displaymath}
	\pi_1 \colon \operatorname{Graph}(\Phi)\longrightarrow E \quad \text{and}\quad \pi_2 \colon \operatorname{Graph}(\Phi)\longrightarrow E,
\end{displaymath}
have a coincidence. We wish to apply Theorem~\ref{thm:EilenbergCoincidenceCondition}, so we verify its conditions. By assumption, $E$ is a $\Bbbk$-acyclic ANR. We also have that for all $x\in E$ the set $\Phi(x)$ is $\Bbbk$-acyclic. By definition of $\pi_1$, we have that $\Phi(x)=\pi_1^{-1}(\{x\})$, proving that $\pi_1$ has the Vietoris property. Finally, we've seen that $\operatorname{Graph}(\Phi)$ is compact. All this verifies the last condition of Theorem~\ref{thm:EilenbergCoincidenceCondition}, meaning we may conclude that $\pi_1$ and $\pi_2$ have a coincidence, and, as explained, we may also conclude that $\mathcal{E}$ has a Nash equilibrium.

\qed

\subsection{Proof of Corollary~\ref{cor:FunctionSpacesCorollary}}
The following Lemma from convex analysis relating quasi-concave maps and properties of their maximum sets will be helpful. Though the proof is simple and well-known, we provide it here for completeness' sake.

\begin{lemma}\label{lem:QuasiConcaveMaximumSet}
	Let $K\subseteq X$ be a compact subset of a normed space, and let $f\colon K\longrightarrow\R$ be continuous and quasi-convex. Then the set
	\begin{displaymath}
		M_f := \operatorname{argmax}(f) := \big\{x\in K\ : f(x)=\max\{f(x') : x'\in K\} \big\}
	\end{displaymath}
	is convex.
\end{lemma}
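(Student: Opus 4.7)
The plan is to proceed directly from the definitions, treating the lemma as the routine convex-analytic fact it is; I read the statement as implicitly requiring $K$ to also be convex and $f$ to be quasi-concave (the natural context from the preceding corollary), since otherwise even $\lambda x + (1-\lambda)y \in K$ is not guaranteed and the direction of the inequality coming from the convexity condition is wrong for the conclusion.

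First I would observe that $M_f$ is nonempty: since $f$ is continuous on the compact set $K$, the extreme value theorem gives the existence of $m := \max\{f(x') : x' \in K\}$ and a point attaining it. Write $M_f = \{x \in K : f(x) = m\}$. The goal is to show this set is closed under convex combinations.

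Next, pick arbitrary $x, y \in M_f$ and $\lambda \in [0,1]$, and set $z := \lambda x + (1-\lambda) y$. Since $K$ is convex, $z \in K$. Applying the quasi-concavity inequality
\begin{displaymath}
f(z) = f(\lambda x + (1-\lambda) y) \geq \min\{f(x), f(y)\} = \min\{m, m\} = m,
\end{displaymath}
while on the other hand $f(z) \leq m$ by the definition of $m$ as the maximum, we conclude $f(z) = m$, i.e.\ $z \in M_f$. Hence $M_f$ is convex.

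There is essentially no obstacle here: the only subtlety is recognizing that the hypothesis as literally stated (compact, not convex; quasi-convex, not quasi-concave) would not yield the claim, so the proof should be written under the natural reading that $K$ is a convex compact subset and $f$ is quasi-concave, which is exactly the setting in which the lemma is invoked in Corollary~\ref{cor:FunctionSpacesCorollary}.
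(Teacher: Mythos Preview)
Your proof is correct and mirrors the paper's own argument almost verbatim: pick two points of $M_f$, use convexity of $K$ to stay inside $K$, apply quasi-concavity to bound $f$ from below by the maximum, and conclude. You are also right that the hypotheses as literally written (``quasi-convex'' and no convexity assumption on $K$) are typos; the paper's own proof silently invokes ``the convexity of $K$'' and ``the quasi-concavity of $f$'' just as you do.
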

\begin{proof}
	Choose any $y_1, y_2\in M_f$ and any $0\leq t\leq 1$.
	Set $y_t=(1-t)y_1+ty_2$.
	The convexity of $K$ implies that $y_t\in K$.
	On the other hand, the quasi-concavity of $f$ yields $f(y_t)\geq \min\{f(y_1),f(y_2)\}$.
	Since $M_f$ is defined as the set of maxima of $f$ and $y_1,y_2\in M_f$, we get that necessarily $y_t\in  M_f$.
	Hence, $M_f$ is convex.
\end{proof}

For our application of this theorem the conclusion that $M_f$ is convex is unnecessarily strong. As one may have suspected, the decisive consequence of $M_f$ being convex is actually that $M_f$ is $\Bbbk$-acyclic with respect to any field $\Bbbk$.

We also need a theorem from homotopy theory, which provides a criterion for a metrizable space to be an ANR. We state the theorem here, a proof can be found in Richard S. Palais' paper~\cite[Thm.\,5]{Palai1965} from 1965.
\begin{theorem}\label{thm:LCTVSANRconditionsPalais}
	A metrizable space is an ANR if each point has a neighborhood homeomorphic to a convex set in a locally convex topological vector space.
\end{theorem}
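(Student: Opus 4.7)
The plan is to reduce the theorem to two classical results from ANR theory and then combine them. The first is a theorem of Dugundji asserting that every convex subset $C$ of a locally convex topological vector space is an \emph{absolute extensor} for metrizable spaces: any continuous map from a closed subset of a metrizable space into $C$ admits a continuous extension to the whole space. When $C$ is additionally metrizable, this extension property forces $C$ to be an absolute retract, and in particular an ANR. The second is a local-to-global theorem due to Hanner: a metrizable space is an ANR if and only if every point has an open neighborhood which is itself an ANR.

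Given these two results, the theorem follows quickly. Under the hypothesis, each point $x\in X$ has a neighborhood $U_x$ homeomorphic to a convex subset $C_x$ of a locally convex topological vector space. Since $X$ is metrizable, so is $U_x$, and therefore so is $C_x$, as the metric transports across the homeomorphism. Dugundji's theorem then says $C_x$ is an ANR, and hence $U_x$ is an ANR. Thus every point of $X$ has an open ANR neighborhood, and Hanner's local-to-global theorem lets us conclude that $X$ itself is an ANR.

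For completeness I would sketch the two inputs. For Dugundji's theorem, given a closed subset $A\subseteq Z$ of a metrizable space and a continuous $f\colon A\to C$, I would take a locally finite open cover $\{V_\alpha\}$ of $Z\setminus A$ with each $V_\alpha$ contained in a ball $B(z_\alpha,\tfrac{1}{2}d(z_\alpha,A))$, pick a point $a_\alpha\in A$ with $d(a_\alpha,V_\alpha)\leq 2\,d(V_\alpha,A)$, choose a subordinate partition of unity $\{\varphi_\alpha\}$, and define the extension by $\tilde f(z)=\sum_\alpha \varphi_\alpha(z)\,f(a_\alpha)$ on $Z\setminus A$ and $\tilde f=f$ on $A$. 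Convexity of $C$ keeps the values in $C$; the quantitative choice of $a_\alpha$ gives continuity across $A$. For Hanner's theorem I would embed $X$ as a closed subset of a normed space, cover it by open neighborhoods that are ANRs, and inductively glue local retractions onto finer and finer pieces using partitions of unity and the ANR extension property. The main obstacle is precisely this gluing step in Hanner's result: naive patching breaks continuity on overlaps, and the fix is a careful inductive construction over a locally finite refinement of the cover, which is the technical heart of the argument and is where the assumption of metrizability of the ambient space is most essentially used.
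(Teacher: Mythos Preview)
The paper does not supply its own proof of this theorem; it merely quotes the statement and refers to Palais~\cite[Thm.\,5]{Palai1965} for the argument. Your reduction to Dugundji's extension theorem (convex subsets of locally convex spaces are absolute extensors, hence metrizable ones are ARs) together with Hanner's local-to-global criterion (a metrizable space all of whose points admit open ANR neighborhoods is an ANR) is correct and is in fact the standard route---indeed, it is essentially how Palais himself proceeds. One small point worth making explicit: the hypothesis gives only a \emph{neighborhood} $U_x$, not necessarily an open one, so to invoke Hanner you should pass to an open $V_x$ with $x\in V_x\subseteq U_x$ and use that open subsets of ANRs are ANRs; this is a one-line fix.
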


With the lemma and the theorem above, we are equipped to verify all the assumptions of Theorem~\ref{thm:MainResult}.

\begin{proof}
	By assumption $E_i\neq \emptyset$ are metrizable compact convex subsets of a locally convex topological vector space, so the same must hold for their product $E$. But now $E$ trivially satisfies the condition of Theorem~\ref{thm:LCTVSANRconditionsPalais}, yielding that $E$ is an ANR. As a convex set, it is contractible, so also $\Bbbk$-acyclic, for any field $\Bbbk$.

	Next, using the notation from Lemma~\ref{lem:QuasiConcaveMaximumSet}, notice that by definition of the best-reply correspondence $\Phi_i(x)$ we have that $\Phi_i(x) = M_{\theta_i(x,\cdot)}$. By assumption, for every $x\in E$ we have that $\theta_i(x, \cdot)$ is quasi-concave, implying by Lemma~\ref{lem:QuasiConcaveMaximumSet} that $\Phi_i(x)$ is convex, from which we conclude that $\Phi(x)$ is also convex, so in particular $\Bbbk$-acyclic.

	With the assumptions from the statement of the corollary and the remaining assumptions verified just now, we have completed the verification of all assumptions of Theorem~\ref{thm:MainResult}, and we may conclude that $\mathcal{E}$ has a Nash equilibrium.
\end{proof}

\subsection{Proof of Corollary~\ref{cor:Ichiishi_1983}}

Once again we verify that the conditions of our main result Theorem~\ref{thm:MainResult} are fulfilled, which is even simpler in this case.
Just like in the proof of Corollary~\ref{cor:FunctionSpacesCorollary} we again use Lemma~\ref{lem:QuasiConcaveMaximumSet} to conclude that $\Phi$ is $\Bbbk$-acyclic-valued. Finite dimensional vector spaces are in particular locally convex spaces, so the same Theorem~\ref{thm:LCTVSANRconditionsPalais} also yields that $E$ is an ANR --- though this can also be seen by simpler means. Finally, since $E$ is convex, it is also $\Bbbk$-acyclic.

The rest of the assumptions of Theorem~\ref{thm:MainResult} are provided by the assumptions of the corollary, so we may again conclude that $\mathcal{E}$ has a Nash equilibrium.
\qed

\subsection{Variant for Abstract Economies in Admissible Form}
Just like in Section~\ref{subsec:ProofOfMainResult} we may exploit our coincidence criterion Lemma~\ref{lem:CoincidenceCriterionAdmissibleForm} for abstract economies in admissible form to get a result similar to Theorem~\ref{thm:MainResult}.
\begin{theorem}\label{thm:MainResultAdmissibleForm}
	Let $\Bbbk$ be a fixed field and let $N\geq 1$ be an integer.
	Suppose that $\mathcal{E}=(E_i,X_i,\theta_i : 1\leq i\leq N)$ is an abstract economy in admissible form given by the set $C\subseteq E$ such that
	\begin{compactenum}[\rm \quad (1)]
		\item $C$ is a compact, $\Bbbk$-acyclic {\rm ANR}, and
		\item the correspondence $\widetilde{R}\colon C\longrightarrow 2^C$ is strict and $\Bbbk$-acyclic-valued.
	\end{compactenum}
	Then the abstract economy $\mathcal{E}$ has a Nash equilibrium.
\end{theorem}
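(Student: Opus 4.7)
The plan is to adapt the proof of Theorem~\ref{thm:MainResult} but with the coincidence criterion for admissible form (Lemma~\ref{lem:CoincidenceCriterionAdmissibleForm}) replacing the general criterion. Concretely, I would verify the hypotheses of the Eilenberg--Montgomery theorem (Theorem~\ref{thm:EilenbergCoincidenceCondition}) for the pair of continuous projections
\begin{displaymath}
\pi_1,\pi_2\colon \operatorname{Graph}(\widetilde{R})\longrightarrow C,
\end{displaymath}
and then invoke Lemma~\ref{lem:CoincidenceCriterionAdmissibleForm} to extract a Nash equilibrium from the resulting coincidence. Since the target $C$ is a compact $\Bbbk$-acyclic ANR by hypothesis~(1), the only conditions left to verify are (a) compactness of $\operatorname{Graph}(\widetilde{R})$ as a metric space, and (b) the Vietoris property for at least one of the projections.

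For (b), the fibres of $\pi_1$ are $\pi_1^{-1}(\{x\})=\{x\}\times\widetilde{R}(x)\cong\widetilde{R}(x)$, which are non-empty by the strictness of $\widetilde{R}$ and $\Bbbk$-acyclic by hypothesis~(2); so $\pi_1$ has the Vietoris property directly and this step is essentially free.

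For (a), I would first promote the profit functions $\theta_i$ to continuous functions on $E\times E_i$ along the lines of Section~\ref{subsec:NikaidoIsodaFunction}, which makes $\Psi$ continuous on $E\times E$ and in particular on $C\times C$. Then I would apply Berge's Maximum Theorem~\ref{thm:MaximumTheorem} with $X=Y=C$, $f=\Psi|_{C\times C}$, and the constant set-valued map $F\colon C\longrightarrow 2^{C}$, $x\longmapsto C$, which is trivially strict, compact-valued, and both upper and lower semicontinuous. This yields that $\widetilde{R}$ is upper semicontinuous on $C$. Moreover, $\widetilde{R}(x)$ is a level set of the continuous function $\Psi(x,\cdot)$, hence closed in the compact space $C$; the Closed Graph Lemma~\ref{lem:ClosedGraphLemma} then gives that $\operatorname{Graph}(\widetilde{R})$ is closed in $C\times C$, hence compact.

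Putting everything together, Theorem~\ref{thm:EilenbergCoincidenceCondition} produces a coincidence $(x,x)\in\operatorname{Graph}(\widetilde{R})$ of $\pi_1$ and $\pi_2$, meaning $x\in\widetilde{R}(x)$, and Lemma~\ref{lem:CoincidenceCriterionAdmissibleForm}(2) delivers the sought Nash equilibrium. The subtlest step I anticipate is the one quietly invoked above, namely ensuring continuity of $\Psi$ on $C\times C$: the theorem statement does not explicitly postulate continuous extensions of the $\theta_i$ nor semicontinuity of the $X_i$, so one must either rely on the implicit Nikaido--Isoda convention that such extensions exist (via Tietze--Urysohn, e.g. after using the Closed Graph Lemma together with compactness of $E$ to establish closedness of $\operatorname{Graph}(X_i)$), or treat these as tacit background hypotheses inherited from Section~\ref{subsec:NikaidoIsodaFunction}. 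Once that point is handled, the rest is a direct reuse of the machinery already assembled in Section~\ref{subsec:ProofOfMainResult}.
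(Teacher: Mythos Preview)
Your proposal is correct and follows essentially the same route as the paper: apply Berge's Maximum Theorem with the constant correspondence $x\mapsto C$ to obtain upper semicontinuity of $\widetilde{R}$, deduce compactness of $\operatorname{Graph}(\widetilde{R})$ via the Closed Graph Lemma, then invoke the Eilenberg--Montgomery theorem and the coincidence criterion for admissible form. Your discussion of the continuity of $\Psi$ is in fact more careful than the paper's own proof, which silently takes this for granted; the minor slip is that the ``part (2)'' you cite belongs to Lemma~\ref{lem:ConditionAbstractEconomyAdmissibleForm}, not Lemma~\ref{lem:CoincidenceCriterionAdmissibleForm}.
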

\begin{proof}
	The only difference to the proof of Theorem~\ref{thm:MainResult}, is how we argue that $\widetilde{R}$ is upper semicontinuous. For this, define $Z\colon C\longrightarrow 2^C$ via $Z(x)= C$ for all $x\in C$. In other words, $Z$ is the constant set-valued map taking the value $C$. Obviously $Z$ is both upper and lower semicontinuous. Since $C$ is compact, we may now apply Berge's Maximum Theorem~\ref{thm:MaximumTheorem} to $Z$, the Nikaido--Isoda function $\Psi$ and $C\times C$. Thus, we obtain that $\widetilde{R}$ is upper semicontinuous. Consequently, the Closed Graph Lemma~\ref{lem:ClosedGraphLemma} yields that $\operatorname{Graph}(\widetilde{R})$ is closed in $C\times C$, therefore compact as a subset of a compact space $C\times C$.

	Now, we may comfortably apply the coincidence criterion for abstract economies in admissible form, Lemma~\ref{lem:CoincidenceCriterionAdmissibleForm}, which requires us to show that
	\begin{displaymath}
		\pi_1 \colon \operatorname{Graph}(\widetilde{R})\longrightarrow C \quad \text{and}\quad \pi_2 \colon \operatorname{Graph}(\widetilde{R})\longrightarrow C,
	\end{displaymath}
	have a coincidence. We know that $C$ is a $\Bbbk$-acyclic ANR, and we know that $\widetilde{R}$ is $\Bbbk$-acyclic-valued, which is equivalent to $\pi_1$ having the Vietoris property. Finally, an application of Theorem~\ref{thm:EilenbergCoincidenceCondition} completes the proof.
\end{proof}

Many abstract economies considered in applications are in admissible form, for example PDE-constrained abstract economies, which appear in the following publications:~\cite{Hintermueller2015}, ~\cite{SurowiecStaudiglKouri2023}, and ~\cite{Hintermueller2022}.


\begin{thebibliography}{10}

	\bibitem{ArrowDebreu1954}
	Kenneth~J. Arrow and Gerard Debreu, \emph{Existence of an equilibrium for a competitive economy}, Econometrica \textbf{22} (1954), no.~3, 265--290.

	\bibitem{Aubin_Frankowska_2009}
	Jean-Pierre Aubin and H\'{e}l\`{e}ne Frankowska, \emph{{Set-Valued Analysis}}, Birkh\"{a}user, 2009.

	\bibitem{Begle1950}
	Edward~G. Begle, \emph{A fixed point theorem}, Ann. of Math. (2) \textbf{51} (1950), 544--550.

	\bibitem{Berge1997}
	Claude Berge, \emph{Topological spaces}, Dover Publications, Inc., Mineola, NY, 1997, Including a treatment of multi-valued functions, vector spaces and convexity, Translated from the French original by E. M. Patterson, Reprint of the 1963 translation.

	\bibitem{Blagojevic2024}
	Matija Blagojevi\'c, \emph{{The Nash Equilibrium Problem: Beyond Convexity}}, Master's thesis, Freie Universit\"at Berlin, Berlin, Germany, September 2024, 42pp.

	\bibitem{Borsuk1967}
	Karol Borsuk, \emph{Theory of retracts}, Monografie Matematyczne, vol. Tom 44, Pa\'nstwowe Wydawnictwo Naukowe, Warsaw, 1967.

	\bibitem{SchefferMicheal2009}
	Michael Br\"{u}ckner and Tobias Scheffer, \emph{Nash equilibria of static prediction games}, Advances in Neural Information Processing Systems (Y.~Bengio, D.~Schuurmans, J.~Lafferty, C.~Williams, and A.~Culotta, eds.), vol.~22, Curran Associates, Inc., 2009.

	\bibitem{EilenbergDeane1946}
	Samuel Eilenberg and Deane Montgomery, \emph{Fixed point theorems for multi-valued transformations}, Amer. J. Math. \textbf{68} (1946), 214--222.

	\bibitem{FacchineiKanzow2007}
	F.~Facchinei and C.~Kanzow, \emph{{Generalized Nash Equilibrium Problems}}, Springer-Verlag (2007).

	\bibitem{Hintermueller2022}
	Deborah Gahururu, Michael Hinterm\"uller, Steven-Marian Stengl, and Thomas~M. Surowiec, \emph{Generalized {N}ash equilibrium problems with partial differential operators: theory, algorithms, and risk aversion}, Non-smooth and complementarity-based distributed parameter systems---simulation and hierarchical optimization, Internat. Ser. Numer. Math., vol. 172, Birkh\"auser/Springer, Cham, 2022, pp.~145--181.

	\bibitem{Hintermueller2023}
	Deborah~B. Gahururu, Michael Hinterm\"uller, and Thomas~M. Surowiec, \emph{{Risk-neutral {PDE}-constrained generalized {N}ash equilibrium problems}}, Math. Program. \textbf{198} (2023), no.~2, 1287--1337.

	\bibitem{GorniewiczGranas}
	Lech G\'{o}rniewicz and Andrzej Granas, \emph{Fixed point theorems for multi-valued mappings of the absolute neighbourhood retracts}, J. Math. Pures Appl. (9) \textbf{49} (1970), 381--395.

	\bibitem{GranasDugundji2003}
	Andrzej Granas and James Dugundji, \emph{Fixed point theory}, Springer Monographs in Mathematics, Springer-Verlag, New York, 2003.

	\bibitem{Hintermuller2023preprint}
	Veronika Grimm, Michael Hinterm\"uller, Olivier Huber, Lars Schewe, Martin Schmidt, and Gregor Z\"ottl, \emph{{A {PDE}-Constrained Generalized Nash Equilibrium Approach for Modeling Gas Markets with Transport}}, preprint on \href{ https://optimization-online.org/2023/03/a-pde-constrained-generalized-nash-equilibrium-approach-for-modeling-gas-markets-with-transport/ }{https://optimization-online.org}.

	\bibitem{Harker1989}
	Patrick~T. Harker, \emph{Generalized nash games and quasi-variational inequalities},  \textbf{54} (1989), 81--94.

	\bibitem{Hintermueller2015}
	M.~Hinterm\"{u}ller, T.~Surowiec, and K\"{a}mmler, \emph{Generalized nash equilibrium problems in banach spaces: Theory, nikaido-isoda-based path-following methods and applications}, Siam J. Optim (2015).

	\bibitem{Hintermueller2013}
	Michael Hinterm\"uller and Thomas~M. Surowiec, \emph{{A {PDE}-constrained generalized {N}ash equilibrium problem with pointwise control and state constraints}}, Pac. J. Optim. \textbf{9} (2013), no.~2, 251--273.

	\bibitem{Sze-tsen1965}
	Sze-Tsen Hu, \emph{Theory of retracts}, Wayne State University Press, Detroit, MI, 1965.

	\bibitem{NikaidoIsoda1955}
	Nikaido Hukukane and Kazuo Isoda, \emph{Note on non-cooperative convex game}, Pac. J. Math. \textbf{5} (1955), no.~5, 807--815.

	\bibitem{Ichiishi_1983}
	Tatsuro Ichiishi, \emph{Game theory for economic analysis}, Academic Press, 1983.

	\bibitem{SurowiecStaudiglKouri2023}
	Drew~P. Kouri, Mathias Staudigl, and Thomas~M. Surowiec, \emph{{A relaxation-based probabilitistic approach for PDE-constrained optimization under uncertainty with pointwise state constraints}},  \textbf{855} (2023), 1--16.

	\bibitem{Masrdesic1999}
	Sibe Marde\v{s}i\'{c}, \emph{Absolute neighborhood retracts and shape theory}, History of topology, North-Holland, Amsterdam, 1999, pp.~241--269.

	\bibitem{McKenzie}
	Lionel~W. McKenzie, \emph{On the existence of general equilibrium for a competitive market}, The Econometric Society (1959), 54--71.

	\bibitem{Munkres2000}
	James~R. Munkres, \emph{Topology}, second ed., Prentice Hall, Inc., Upper Saddle River, NJ, 2000.

	\bibitem{Nash1951}
	John~F. Nash, Jr., \emph{Non-cooperative games}, Ann. of Math. (2) \textbf{54} (1951), 286--295.

	\bibitem{Palai1965}
	Richard~S. Palais, \emph{Homotopy theory of infinite dimensional manifolds}, Topology \textbf{5} (1966), 1--16.

	\bibitem{CloudSystems2011}
	Barbara Panicucci and Mauro Passacantando, \emph{A game theoretic formulation of the service provisioning problem in cloud systems.}, 01 2011, pp.~177--186.

	\bibitem{Powers1970}
	Michael~J. Powers, \emph{Multi-valued mappings and {L}efschetz fixed point theorems}, Proc. Cambridge Philos. Soc. \textbf{68} (1970), 619--630.

	\bibitem{Powers1972}
	\bysame, \emph{Lefschetz fixed point theorems for a new class of multi-valued maps}, Pacific J. Math. \textbf{42} (1972), 211--220.

	\bibitem{Reed1980}
	Robert~E. Reed, \emph{Foundations of {V}ietoris homology theory with applications to noncompact spaces}, Dissertationes Math. (Rozprawy Mat.) \textbf{180} (1980), 1--47.

	\bibitem{Stengl2021}
	Steven-Marian Stengl, \emph{Path-following methods for generalized nash equilibrium problems}, 2021.

	\bibitem{Tesfatsion1983}
	Leigh Tesfatsion, \emph{Pure strategy {N}ash equilibrium points and the {L}efschetz fixed point theorem}, Internat. J. Game Theory \textbf{12} (1983), no.~3, 181--191.

	\bibitem{Vietoris1927}
	Leopold Vietoris, \emph{\"{U}ber den h\"{o}heren {Z}usammenhang kompakter {R}\"{a}ume und eine {K}lasse von zusammenhangstreuen {A}bbildungen}, Math. Ann. \textbf{97} (1927), no.~1, 454--472.

\end{thebibliography}
\end{document}